\title[Uniform Asymptotics for Associated Stirling Numbers]{Uniform convergence of an asymptotic approximation to associated Stirling numbers}
\author{E.~Rodney~Canfield}
\email{ercanfie@uga.edu}
\address{University of Georgia, Athens}
\author{J.~William~Helton}
\email{helton@math.ucsd.edu}
\address{University of California, San Diego}
\author{Jared~A.~Hughes}
\email{jahughes@ucsd.edu}
\address{University of California, San Diego}
\date{\today}
\numberwithin{equation}{subsection}
\newtheorem{theorem}{Theorem}[subsection]
\newtheorem{lemma}[theorem]{Lemma}
\newtheorem{corollary}[theorem]{Corollary}
\newtheorem{conjecture}[theorem]{Conjecture}
\theoremstyle{definition}
\theoremstyle{definition}
\theoremstyle{remark}
\DeclareRobustCommand{\curlyStirling}{\genfrac\{\}{0pt}{}}
\newcommand{\abs}[1]{\left\lvert #1\right\rvert}
\def\bs{\bigskip}
\def\Qinv{{\xi_r(p/q)}}
\newcommand\cC{\mathscr{C}}
\begin{document}

\begin{abstract}

Let $S_r(p,q)$ be the $r$-associated Stirling numbers of the second kind, the number of ways to partition a set of size $p$ into $q$ subsets of size at least $r$. For $r=1$, these are the standard Stirling numbers of the second kind, and for $r=2$, these are also known as the Ward Numbers. This paper concerns, asymptotic expansions of these Stirling numbers;
such expansions have been known for many years
(\cite{moserWyman} \cite{Hennecart}).
However, while uniform convergence of these expansions was conjectured in \cite{Hennecart}, it has not been fully proved. A recent paper
\cite{stirlingConnDobro} went a long way, by proving 
uniform convergence on a large set.
In this paper we build on that paper and prove convergence ``everywhere.''

\end{abstract}

\maketitle

\section{Introduction}

We begin by stating preliminaries to define the
Hennecart 
Stirling approximation whose uniform convergence we shall be considering.
Then this section states our main uniform convergence theorem, \Cref{thm:mainUniform}.

The proof
of the main theorem is the subject of \Cref{sec:proofs}.
It depends heavily on the main theorem (Theorem 1.1) of
\cite{stirlingConnDobro}, which proved uniform convergence on a smaller set.

Finally, \Cref{sec:misc} gives 
conjectures as well as some graphs illustrating 
the precision of asymptotic approximations. Also, it provides a new detail,
\Cref{lem:zeroes-of-Br},
which is needed both in our proof and the proof of Theorem 1.1 \cite{stirlingConnDobro}.

\subsection{Definitions}

This presentation starts by introducing some of the building blocks of the formulas central to this paper.

Let $S_r(p,q)$ be the $r$-associated Stirling numbers of the second kind, the number of ways to partition a set of size $p$ into $q$ subsets of size at least $r$. In this paper, we only use $p \geq 1$ and $1 \leq q \leq p/r$. It is standard to define $S_r(p,q)=0$ at all other pairs $(p,q)$, except $S_r(0,0)=1$. 
There are several explicit formulas for $S_r$
which we introduce where they are needed.
A contour integral formula useful for asymptotic theory is
\Cref{eq:Sr-exact-contour-integral}, and a formula suited to
exact integer calculations for $r=2$ is given in \Cref{eq:alekseyev-stirling}.

Now we turn to the main ingredients of 
Hennecart's asymptotic formula for $S_r$.
Define 
\begin{align*}
    B_r(z) \coloneqq e^z - \sum_{k=0}^{r-1} z^k/k!, \qquad Q_r(z) \coloneqq \frac{z B'_r(z)}{B_r(z)}.
\end{align*}
Note as $z\to 0$,
\begin{align*}
    B_r(z) = \frac{z^k}{k!} + O(z^{k+1})
\end{align*}
It is well-known that $Q_r$ is invertible for $z>0$ (see \cite{stirlingConnDobro} Lemma 3.2), and in \Cref{lem:B-ratio-increasing}, we prove something a more general (but with less quantitative power than Lemma 3.2 of \cite{stirlingConnDobro}); hence we can define an inverse function $\xi_r\colon (0,r) \to (0,\infty)$ as
       $$\xi_r = Q_r^{-1}.$$
\cite{Hennecart} and \cite{stirlingConnDobro} 
rely on the 
function
\begin{align*}
    z_0 \coloneqq Q_r^{-1}\left(\frac p q\right) =  \xi_r\left(\frac p q \right).
\end{align*}

\subsection{Our main theorem; convergence of the 
Hennecart approximation formula}

The approximation to $S_r$ provided in 
\cite{Hennecart} is with
\begin{align}
    F_r(p,q) = \frac{p!}{q!(p-rq)!} \left(\frac{p-rq}{e}\right)^{p-rq} \frac{(B_r(z_0))^q}{z_0^{p+1}} \sqrt{\frac{q t_0}{\Phi''(z_0)}},
    \label{eq:def-Hennecart-Fr}
\end{align}
where $qt_0 = p-rq$ and $\Phi(z) = -p \ln(z) + q \ln B_r(z)$, recalling $z_0=\Qinv$. Define
\begin{align}
    H_r (z) &\coloneqq \frac{B_{r-1}(z) B_r (z) + z B_{r-2}(z) B_r(z) - z B_{r-1}^2 (z)}{2 B_r^2 (z)}
\end{align}
Then $\Phi''(z_0) = 2q H_r(z_0)/z_0$.

The main result of this paper asserts uniform convergence of this approximation, as conjectured in \cite{Hennecart}.

\begin{theorem}
    \label{thm:mainUniform}
    For all $r\geq 1$, the asymptotic approximation
    \begin{align}
        S_r(p,q) \sim F_r(p,q) \label{eq:hennecart-approx-original}
    \end{align}
    converges as $p\to\infty$, uniformly for $1\leq q < p/r$. Precisely, 
    \begin{align*}
        \abs{\frac{F_r(p,q)}{S_r(p,q)} - 1} \leq E_r(p)
    \end{align*}
    for some function $E_r(p)$ that depends on $r$ and $p$ only, and for each $r$ we have $E_r(p) \to 0$ as $p \to \infty$.
\end{theorem}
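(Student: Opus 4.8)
The plan is to reduce the ``everywhere'' statement to the known uniform convergence theorem (Theorem 1.1 of \cite{stirlingConnDobro}) by carefully covering the index region $1 \le q < p/r$ with two or three overlapping regimes, and handling the ``edge'' regimes not covered by \cite{stirlingConnDobro} by direct asymptotic analysis. The key structural observation is that $p/q$ ranges over $(r,\infty)$ as $(p,q)$ ranges over the admissible lattice points, and that the saddle point $z_0 = \xi_r(p/q)$ degenerates in two ways: as $p/q \to r^+$ we have $z_0 \to 0^+$ (this is the regime where $q$ is close to $p/r$, i.e. almost all blocks have the minimal size $r$), and as $p/q \to \infty$ we have $z_0 \to \infty$ (this is $q$ small relative to $p$, down to $q$ bounded). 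The theorem of \cite{stirlingConnDobro} gives uniform convergence when $p/q$ stays in a compact subinterval of $(r,\infty)$, or more precisely on a set that excludes neighborhoods of these two degeneracies; so the work is to control those two neighborhoods and then glue.

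First I would make precise the region $\mathcal{R}_{\mathrm{mid}}$ on which \cite{stirlingConnDobro} directly applies, say $\{(p,q): r + \delta_p \le p/q \le M_p\}$ for slowly-varying $\delta_p \to 0$ and $M_p \to \infty$ chosen so that the quoted bound of Theorem 1.1 still tends to $0$ — this requires inspecting the explicit error term there and checking how slowly $\delta_p$ may shrink and $M_p$ may grow. Then I would treat the \textbf{small-$z_0$ regime}, $p/q$ close to $r$: here I expect to use the known exact formula for $S_r$ suited to this range (the paper alludes to an Alekseyev-type formula for $r=2$, and there should be analogues) together with the local expansion $B_r(z) = z^r/r! + O(z^{r+1})$ near $z=0$ from the excerpt, feeding into the contour-integral representation \Cref{eq:Sr-exact-contour-integral}. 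The point is that when $q$ is within, say, $o(p)$ of $p/r$, the number $p - rq$ of ``excess'' elements is small compared to $p$, and $S_r(p,q)$ is governed by a concrete combinatorial count (distributing $p-rq$ extra elements among $q$ blocks of size $r$), for which both $S_r$ and $F_r$ can be expanded explicitly and compared. I would verify that $F_r(p,q)/S_r(p,q) \to 1$ uniformly over this sliver, with an error bound depending on $p$ alone.

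Next I would treat the \textbf{large-$z_0$ regime}, $p/q \to \infty$, which includes $q$ bounded; here $z_0 \to \infty$, $B_r(z_0) \sim e^{z_0}$, $Q_r(z_0) \sim z_0$, so $z_0 \sim p/q$, and the saddle-point integral \Cref{eq:Sr-exact-contour-integral} can be estimated directly, or one can use the inclusion–exclusion formula $S_r(p,q) = \frac{1}{q!}\sum (\pm)\binom{q}{\cdots}\cdots$ expressing $S_r$ in terms of ordinary Stirling numbers, whose asymptotics in this regime are classical. In each of the two edge regimes the quantities $\Phi''(z_0) = 2q H_r(z_0)/z_0$ and $t_0 = (p-rq)/q$ are explicit, so the ratio $F_r/S_r$ is a concrete expression whose limit I can read off; the deliverable is again a bound $E_r^{\mathrm{edge}}(p) \to 0$. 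Finally I would assemble: since the three regions overlap and their union is all of $\{1 \le q < p/r\}$ once $p$ is large, the maximum of the three (or two) error bounds is a valid $E_r(p)$, and it tends to $0$ because each piece does. \textbf{The main obstacle} I anticipate is the small-$z_0$ regime: matching the saddle-point-derived expression $F_r$ against the genuinely combinatorial behavior of $S_r$ uniformly as $p/q \to r^+$ is delicate, because the Laplace approximation underlying $F_r$ is being pushed to the boundary of its validity, and one must show the neglected terms in $\Phi$'s Taylor expansion at $z_0$ remain negligible even as $z_0 \to 0$ at a rate tied to $p$; getting the two slowly-varying cutoffs $\delta_p$ and $M_p$ to be simultaneously compatible with \cite{stirlingConnDobro}'s error term and with the edge estimates is where the real bookkeeping lies, and \Cref{lem:zeroes-of-Br} (on the zeros of $B_r$) is presumably what keeps $H_r(z_0)$ bounded away from $0$ and makes the square-root factor in $F_r$ well-behaved throughout.
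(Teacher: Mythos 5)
Your overall strategy --- cover $1\le q<p/r$ by overlapping regimes, apply \cite{stirlingConnDobro} in the bulk, and handle the edge $q$ near $p/r$ by a direct combinatorial count built on the $a=p-rq$ ``excess'' elements --- is exactly the paper's strategy, and your description of the small-$z_0$ sliver is precisely the content of \Cref{lem:full-convergence-large-q} (a sum over integer partitions of $a$) and \Cref{lem:Rod-approx-eq-Henn}. One structural difference: the paper shows the large-$z_0$ edge ($q$ small or bounded) needs no separate treatment. The correct unified hypothesis under which CD's saddle-point argument survives is $qz_0\to\infty$, and \Cref{lem:qz0-to-infty-except-large-q-precise} shows $qz_0=\Omega(\min\{p^{1-\epsilon},\,p-rq\})$, so the only way this hypothesis can fail is for $p-rq$ small; in particular $qz_0=\Theta(p)$ for bounded or slowly growing $q$. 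Hence there are only two regimes, not three, and your proposed separate analysis for $p/q\to\infty$ is unnecessary (though it would presumably work).

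Where your plan has a genuine gap is that the two regimes you describe do not yet meet. The partition-counting argument needs $a=o(q^{1/2})$ merely to discard the constraint $k\le q$ and to approximate $(q-k)!$, and $a=o(q^{2/5})$ to control the final error term; it cannot be stretched to ``$q$ within $o(p)$ of $p/r$'' as you suggest. Consequently the entire burden falls on pushing the CD region down to $p-rq=\Omega(p^{\delta})$ for some small $\delta$ (the paper takes $\delta=1/5$ against the edge's reach of $p^{2/5}$), and that extension is the main technical content of the paper (\Cref{lem:CD-unif-convergent}): one must re-examine CD's proof and verify that every error estimate is controlled by $(qz_0)^{-1}$ rather than $p^{-1}$, in a regime where $z_0$ is no longer bounded away from $0$ and $qz_0\neq\Theta(q)$. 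You flag this as ``bookkeeping,'' but it is the theorem; without it the regions $p-rq\ge\delta_2 p$ and $p-rq=o(p^{2/5})$ leave an uncovered band. A minor correction: \Cref{lem:zeroes-of-Br} is not what keeps $H_r(z_0)$ away from $0$ (in fact $H_r(z)\to\infty$ as $z\to 0$, and $\Phi''(z_0)\sim a/z_0^2$ there); it supplies the nonvanishing of $B_r$ on a complex sector, needed for the regularity of an integrand in CD's contour argument.
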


\begin{proof} 
The proof takes the remainder of this paper and culminates  in
\Cref{sec:proof-mainUniform}.
\end{proof}

\section{Proofs}
\label{sec:proofs}

Our proof depends heavily on the proof of the main theorem 
of \cite{stirlingConnDobro}, so we begin by stating 
it in our notation.
Then in \Cref{lem:CD-unif-convergent}, we state (and confirm) a variant of it which has the weaker hypothesis 
$$qz_0= q \Qinv \to \infty \qquad \text{as} \qquad p \to \infty. $$
In \Cref{lem:qz0-to-infty-except-large-q-precise},
we show that $q \Qinv \to \infty$ indeed holds under the assumption $p-rq = \Omega(p^{1/5})$.

Next, in \Cref{lem:full-convergence-large-q}, we prove an approximation that holds when $p-rq=o(p^{2/5})$ by an entirely different approach than \cite{stirlingConnDobro}. \Cref{lem:Rod-approx-eq-Henn} shows this new approximation is indeed uniformly equivalent to Hennecart's formula.

These two regions ($p-rq = \Omega(p^{1/5})$ and $p-rq=o(p^{2/5})$) overlap, hence we get uniform convergence of the Hennecart approximation as claimed by \Cref{thm:mainUniform}. The details of this are handled in \Cref{sec:proof-mainUniform}.

\subsection{The CD approximation formula and CD's theorem on its convergence}
\label{sec:CD-approx-formula}

Closely related to the Hennecart approximation
is a formula introduced in \cite{stirlingConnDobro}. They prove its convergence in a broad range, and our proofs build on it. 

The CD approximation (same form as from \cite{stirlingConnDobro}) is based on the function
$C_r$ defined by
\begin{align}
    C_r(p,q) &\coloneqq F_r(p,q) (p-rq)! \left(\sqrt{2\pi(p-rq)}\left(\frac{p-rq}{e}\right)^{p-rq}\right)^{-1} \label{eq:def-CD-formula}
    \\ &= \frac{p!(B_r(z_0))^q}{2q!z_0^p \sqrt{q\pi z_0 H(z_0)}}
\end{align}

We shall use heavily the
main theorem of \cite{stirlingConnDobro}
and so we now state it.

\begin{theorem}(Theorem 1.1 from \cite{stirlingConnDobro})
Let $r$ be a fixed positive integer. The asymptotic approximation 
\begin{align}
    \label{eq:CD-approx}
    S_r(p,q)\sim F_r(p,q)
\end{align}
converges as $p\to \infty$ uniformly for all $\delta_1 p < q < (1 - \delta_2)p/r$, where $p$ and $q$ are integers, and $\delta_1$ and $\delta_2$ are any positive constants.
\end{theorem}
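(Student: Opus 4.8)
I would prove this by the classical saddle-point method applied to the contour-integral representation of $S_r(p,q)$. From the exponential generating function $\sum_{p}S_r(p,q)\,z^p/p!=B_r(z)^q/q!$ one has \Cref{eq:Sr-exact-contour-integral},
\[
  S_r(p,q)=\frac{p!}{q!}\,\frac{1}{2\pi i}\oint\frac{B_r(z)^q}{z^{p+1}}\,dz
          =\frac{p!}{q!}\,\frac{1}{2\pi i}\oint\frac{e^{\Phi(z)}}{z}\,dz,
\]
where $\Phi(z)=-p\ln z+q\ln B_r(z)$, and $\Phi'(z)=z^{-1}\bigl(qQ_r(z)-p\bigr)$ vanishes exactly at the positive saddle $z_0=\Qinv$. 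I would take the contour to be the circle $z=z_0e^{i\theta}$, $\theta\in[-\pi,\pi]$; putting $g(\theta)\coloneqq\Phi(z_0e^{i\theta})$, so that $g'(0)=0$ and $g''(0)=-z_0^2\Phi''(z_0)=-2qz_0H_r(z_0)<0$, this becomes
\[
  S_r(p,q)=\frac{p!}{q!}\,\frac{B_r(z_0)^q}{2\pi\,z_0^{p}}\int_{-\pi}^{\pi}e^{\,g(\theta)-g(0)}\,d\theta.
\]
The two hypotheses $\delta_1 p<q<(1-\delta_2)p/r$ are precisely what make every subsequent estimate uniform: they confine $p/q$ to an interval bounded away from $r$ and from $\infty$, hence confine $z_0=\xi_r(p/q)$ to a compact $[a,b]\subset(0,\infty)$ depending only on $r,\delta_1,\delta_2$; consequently $z_0$, $B_r(z_0)$, and $H_r(z_0)$ are bounded above and below by positive constants, $p-rq>\delta_2 p\to\infty$, and $p=O(q)$.

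Next I would split the $\theta$-integral at $\abs{\theta}=\theta_0$ with $\theta_0=p^{-2/5}$. On the central arc $\abs{\theta}\le\theta_0$, Taylor expansion gives $g(\theta)-g(0)=\tfrac12 g''(0)\theta^2+R(\theta)$; since $\Phi^{(k)}(w)=O(q)$ uniformly for $w$ in a fixed complex neighborhood of $[a,b]$ (using $p=O(q)$ and that $B_r$ is bounded away from $0$ there), we get $\abs{R(\theta)}=O\bigl(q\abs{\theta}^3\bigr)=O\bigl(q\theta_0^3\bigr)=O\bigl(p^{-1/5}\bigr)\to 0$ uniformly. Because $-g''(0)=2qz_0H_r(z_0)$ is bounded below by a multiple of $p$, the Gaussian $\int_{\mathbb R}e^{\tfrac12 g''(0)\theta^2}\,d\theta=\sqrt{-2\pi/g''(0)}$ concentrates on a window of width of order $p^{-1/2}\ll\theta_0$; together with $\abs{R(\theta)}\to0$ this gives $\int_{-\theta_0}^{\theta_0}e^{g(\theta)-g(0)}\,d\theta=(1+o(1))\sqrt{-2\pi/g''(0)}$. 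Collecting factors yields $S_r(p,q)=(1+o(1))\,C_r(p,q)$ with $C_r$ as in \Cref{eq:def-CD-formula}; replacing $(p-rq)!$ by its Stirling approximation, legitimate since $p-rq\ge\delta_2 p\to\infty$, converts $C_r(p,q)$ into $F_r(p,q)$ and gives \Cref{eq:CD-approx}.

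Finally I would show the tail $\theta_0\le\abs{\theta}\le\pi$ is negligible. Since $B_r(z)=\sum_{k\ge r}z^k/k!$ has nonnegative Taylor coefficients, with the exponents $r$ and $r+1$ both present, one has $\abs{B_r(z_0e^{i\theta})}\le B_r(z_0)$, and isolating the $(j,k)=(r,r+1)$ cross term gives the quantitative gap
\[
  B_r(z_0)^2-\abs{B_r(z_0e^{i\theta})}^2\ \ge\ \frac{2z_0^{2r+1}}{r!\,(r+1)!}\bigl(1-\cos\theta\bigr)\ \ge\ c\,\theta^2
\]
for $\abs{\theta}\le\pi$, with $c>0$ uniform over $z_0\in[a,b]$; a sharper lower bound for $\abs{B_r}$ off the positive axis is exactly the content of \Cref{lem:zeroes-of-Br}. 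Hence $\abs{B_r(z_0e^{i\theta})}^q\le B_r(z_0)^q e^{-c'q\theta^2}$, and since $q\theta^2\ge q\theta_0^2$ is of order $p^{1/5}$ on the tail, $\int_{\theta_0\le\abs{\theta}\le\pi}\bigl|e^{g(\theta)-g(0)}\bigr|\,d\theta\le 2\pi e^{-c''p^{1/5}}$, which is superpolynomially smaller than the central contribution, of order $p^{-1/2}$. Adding the two pieces proves the stated uniform asymptotic.

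The step I expect to be the real obstacle is keeping everything \emph{uniform}: the remainder estimate $\abs{R(\theta)}=O\bigl(q\abs{\theta}^3\bigr)$, the lower bound on $\abs{g''(0)}$, and the tail constant $c$ must each be controlled by quantities depending only on $r,\delta_1,\delta_2$, with nothing collapsing as $q/p\to\delta_1$ or $q/p\to(1-\delta_2)/r$. Compactness of the $z_0$-range reduces this to the continuity and strict positivity of a handful of explicit functions of $z_0$ (chiefly $B_r$ and $H_r$), but arranging the split so that one choice of $\theta_0$ simultaneously defeats the cubic error $q\theta_0^3$ and the tail mass $e^{-cq\theta_0^2}$ is where the care goes.
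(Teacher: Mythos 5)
Your sketch is correct and follows essentially the same route as the cited proof of Connamacher--Dobrosotskaya that this paper summarizes in \Cref{lem:CD-unif-convergent}: the exact contour integral \Cref{eq:Sr-exact-contour-integral} evaluated on the circle through the saddle $z_0=\Qinv$, a split into a central arc handled by Gaussian (Laplace) approximation and an exponentially small tail controlled via the nonnegative Taylor coefficients of $B_r$, followed by the Stirling conversion of $C_r$ into $F_r$ using $p-rq>\delta_2 p$. The uniformity concerns you flag at the end are exactly the ones resolved by confining $z_0$ to a compact subset of $(0,\infty)$, which is what the hypotheses $\delta_1 p<q<(1-\delta_2)p/r$ provide.
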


The goal of this paper is to
remove the need for $\delta_1$ and $\delta_2$. We begin by showing CD's result holds as long as $qz_0\to\infty$, even without the inequality $\delta_1 p < q < (1 - \delta_2)p/r$.

Now we observe that the CD approximation and Hennecart's formula converge to each other, with the only difference between them being the ratio between $(p-rq)!$ and its Stirling approximation:
\begin{lemma}
    \label{lem:CD-matches-Hennecart}
    
    For all $r\geq 1$ and sequence $h(p)$ with $h(p) \to \infty$ as $p\to \infty$, we have
    \begin{align}
        F_r(p,q) \sim C_r(p,q)
    \end{align}
    is uniformly convergent for $1 \leq q \leq (p-h(p))/r$ as $p\to\infty$.
\end{lemma}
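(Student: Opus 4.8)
The plan is to recognize that the quotient $C_r(p,q)/F_r(p,q)$ is, by the very definition of $C_r$ in \Cref{eq:def-CD-formula}, precisely the ratio of $(p-rq)!$ to its Stirling approximation, and then to control that ratio uniformly using the hypothesis $q\le (p-h(p))/r$.

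Concretely, directly from \Cref{eq:def-CD-formula},
\begin{align*}
    \frac{C_r(p,q)}{F_r(p,q)} \;=\; \frac{(p-rq)!}{\sqrt{2\pi(p-rq)}\,\bigl((p-rq)/e\bigr)^{p-rq}}.
\end{align*}
Set $n\coloneqq p-rq$. The hypothesis gives $rq\le p-h(p)$, hence $n\ge h(p)$; since $h(p)\to\infty$, for all sufficiently large $p$ we have $n\ge 1$ for every admissible $q$, and in fact $n\to\infty$ uniformly in $q$. Now I would apply a quantitative Stirling estimate — for instance Robbins' bounds $\sqrt{2\pi n}\,(n/e)^n e^{1/(12n+1)}\le n!\le \sqrt{2\pi n}\,(n/e)^n e^{1/(12n)}$ for integers $n\ge 1$ — to get
\begin{align*}
    e^{1/(12n+1)}\;\le\;\frac{C_r(p,q)}{F_r(p,q)}\;\le\;e^{1/(12n)}.
\end{align*}
Consequently $\bigl|C_r(p,q)/F_r(p,q)-1\bigr|\le e^{1/(12n)}-1\le e^{1/(12h(p))}-1$, and the reciprocal bounds give an estimate of the same shape for $\bigl|F_r(p,q)/C_r(p,q)-1\bigr|$. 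Taking $E(p)\coloneqq e^{1/(12h(p))}-1\to 0$ then yields the asserted uniform convergence on $1\le q\le (p-h(p))/r$.

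I do not expect any real obstacle here; this is a short, self-contained computation. The only two points requiring a line of care are: (i) observing that the hypothesis on $q$ is exactly what forces $n=p-rq\to\infty$, so that Stirling's formula applies with an error that vanishes (this is also why the endpoint $q$ near $p/r$, where $n$ would be small, must be excluded); and (ii) recording that this error, once written in terms of $h(p)$ rather than $n$, is manifestly independent of $q$, which is what ``uniform'' requires.
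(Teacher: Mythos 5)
Your proposal is correct and follows essentially the same route as the paper: both identify $C_r/F_r$ from \Cref{eq:def-CD-formula} as the ratio of $(p-rq)!$ to its Stirling approximation and then use $p-rq\ge h(p)\to\infty$ to conclude uniform convergence. Your use of Robbins' two-sided bounds merely makes the paper's $1+O(1/(p-rq))$ error term explicit.
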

\begin{proof}
    The conversion needed here between $C_r$ (CD's formula) and $F_r$ (Hennecart's formula) is due to \cite{stirlingConnDobro}
    (after their Equation 4.1); they perform algebraic manipulation discussed in \Cref{sec:CD-approx-formula}. By definition of $C_r$ in terms of $F_r$ in \Cref{eq:def-CD-formula},
    \begin{align}
        \frac{C_r(p,q)}{F_r(p,q)} &= (p-rq)! \left(\sqrt{2\pi(p-rq)}\left(\frac{p-rq}{e}\right)^{p-rq}\right)^{-1}.  \label{eq:CD-FR-ratio}
    \end{align}
    Note \Cref{eq:CD-FR-ratio} is equal to the ratio of $(p-rq)!$ and the Stirling approximation to $(p-rq)!$, so asymptotically as $p-rq\to\infty$,
    \begin{align}
        \frac{C_r(p,q)}{F_r(p,q)} &= 1 + O\left(\frac{1}{p-rq}\right),
    \end{align}
    which is uniformly convergent to $1$ as $p-rq \to \infty$. Note $p-rq \geq h(p)$ and $h(p)\to\infty$ by assumption, so $p-rq\to\infty$.
\end{proof}

\subsection{Preparatory lemmas on the behavior of $Q_r$}

This subsection provides results on the behavior of $Q_r = x B_r'(x)/B_r(x)$. 

CD Lemma 3.2 shows that $1/(r+1) \leq Q_r'(z) \leq 1$ for all $z > 0$, hinging on the behavior of $\frac{z^{r-1}}{(r-1)! B_r(z)}$. 
While this suffices for our needs, we include 
a result that is less quantitative (it merely shows $Q_r'(z) > 0$) but works for a general class of functions.
Its proof also is based on a new idea.

\Cref{lem:B-ratio-increasing} shows $Q_r(x)$ is strictly increasing at each $x \in [0,\infty)$. Then \Cref{lem:QTo0} and \Cref{lem:QInvLimiting} state asymptotic behavior of $Q_r$ and $Q_r^{-1}$.

\begin{lemma}
    \label{lem:B-ratio-increasing}

    Suppose that
    \begin{align}
        B(x) = \sum_{j\geq 0} a_j x^j
    \end{align}
    is an entire function with $a_j \geq 0$, and at least two of the coefficients are strictly positive. Then
    \begin{align}
        \frac{x B'(x)}{B(x)}
    \end{align}
    is entire and is a strictly increasing function over $x \in [0,\infty)$.
        
    In particular, for each positive integer $r$, we have 
    $Q_r(x) = x B_r'(x) / B_r(x)$ is strictly increasing at each $x\in [0,\infty)$.
\end{lemma}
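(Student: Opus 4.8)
The plan is to show that the logarithmic derivative $\frac{d}{dx}\log B(x) = B'(x)/B(x)$ has the property that $x\mapsto x\,B'(x)/B(x)$ is strictly increasing, and the natural route is to compute the derivative of $Q(x) := xB'(x)/B(x)$ and show it is positive for $x>0$ (the value at $x=0$ being handled separately, since $B(x)\sim a_k x^k$ near $0$ gives $Q(0)=k$ in the limit, or a removable situation once we check entireness). A clean way to package this: write $Q(x) = x\,(\log B)'(x)$, so $Q'(x) = (\log B)'(x) + x(\log B)''(x)$. The key identity I would exploit is that for a power series $B(x)=\sum a_j x^j$ with nonnegative coefficients, evaluated at $x>0$, the quantity $xB'(x)/B(x)$ is exactly the mean of the probability distribution on $\{0,1,2,\dots\}$ that assigns mass $a_j x^j / B(x)$ to the integer $j$; this is the standard "Boltzmann" or "size-biasing" interpretation. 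Then $x\frac{d}{dx}$ applied to $Q$ produces the variance of that same distribution.

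Concretely, let $p_j(x) = a_j x^j / B(x)$, a probability distribution on $\mathbb{Z}_{\geq 0}$ for each fixed $x>0$ (all $p_j \geq 0$, summing to $1$). Then $Q(x) = \sum_j j\, p_j(x) = \mathbb{E}_x[J]$ where $J$ is the random index. A direct computation — differentiating $\sum_j a_j x^j \cdot j$ and $\sum_j a_j x^j$ and using the quotient rule — gives
\begin{align*}
    x Q'(x) = \frac{x^2 B''(x) + x B'(x)}{B(x)} - \left(\frac{x B'(x)}{B(x)}\right)^2 = \mathbb{E}_x[J^2] - (\mathbb{E}_x[J])^2 = \operatorname{Var}_x(J).
\end{align*}
Since variance is always nonnegative, $Q'(x) \geq 0$; and $\operatorname{Var}_x(J) = 0$ only if $J$ is almost surely constant, i.e.\ only one $p_j(x)$ is nonzero, which for $x>0$ would force all but one $a_j$ to vanish — contradicting the hypothesis that at least two coefficients are strictly positive. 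Hence $Q'(x) > 0$ for all $x>0$, so $Q$ is strictly increasing on $(0,\infty)$, and by continuity (plus the boundary behavior at $0$) strictly increasing on $[0,\infty)$.

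The remaining points are routine: first, entireness of $Q(x) = xB'(x)/B(x)$ as a function on $[0,\infty)$ (really, analyticity in a neighborhood of $[0,\infty)$) needs the observation that $B$ has no zeros on $(0,\infty)$ because $B(x) = \sum a_j x^j > 0$ there, and near $x=0$ the factor $x$ in the numerator cancels the order of vanishing appropriately so that $xB'/B$ extends analytically through $0$ — if $a_k$ is the first nonzero coefficient, then $B(x) = x^k(a_k + a_{k+1}x + \cdots)$ with the parenthetical factor nonvanishing near $0$, and $xB'(x) = x^k(k a_k + (k+1)a_{k+1}x+\cdots)$, so the ratio is analytic at $0$ with value $k$. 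Second, applying the general result to $B = B_r$ requires only checking that $B_r(z) = e^z - \sum_{k=0}^{r-1} z^k/k! = \sum_{j \geq r} z^j/j!$ has all coefficients nonnegative (indeed all strictly positive for $j \geq r$), so in particular at least two are strictly positive; this is immediate. The main (and only real) obstacle is recognizing the variance interpretation — once that is in hand, strict positivity of $Q'$ is essentially free, and the rest is bookkeeping about analytic continuation across $x=0$ and the trivial verification for $B_r$.
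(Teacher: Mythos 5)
Your proof is correct and is essentially the paper's own argument in probabilistic dress: the key identity $xQ'(x) = \bigl(B(x)\,\Theta^2B(x) - (\Theta B(x))^2\bigr)/B(x)^2$ with $\Theta = x\,d/dx$ is exactly what the paper computes, and your ``variance of the index distribution $p_j(x)=a_jx^j/B(x)$'' is the same quantity the paper shows positive by expanding it as $\tfrac12\sum a_ia_j(i-j)^2x^{i+j}$ (which is just the identity $\operatorname{Var}(J)=\tfrac12\mathbb{E}[(J-J')^2]$). The strictness argument --- degeneracy of the distribution forces all but one $a_j$ to vanish --- likewise matches the paper's observation that $a_ia_j\neq 0$ for some $i\neq j$ makes the sum strictly positive.
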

\begin{proof}
Let $\Theta$ be the operator $(x\,d/dx)$.  Then
$$
\Theta B(x) = \sum a_j j x^j
$$
and
$$
(\Theta)^2 B(x) = \sum a_j j^2 x^j
$$
The product $\Theta B(x) \times \Theta B(x)$ is the sum over certain pairs $(i,j)$
of
$$
a_i a_j ij x^{i+j}.
$$
The product $B(x) \times (\Theta)^2 B(x)$ is the sum over the same pairs $(i,j)$
of
$$
a_i a_j i^2 x^{i+j}.
$$
The product $B(x) \times (\Theta)^2 B(x)$ is the sum over the same pairs $(i,j)$
of
$$
a_i a_j j^2 x^{i+j}.
$$

\bs

\noindent It follows that the difference
\begin{align}
[ B(x) \times (\Theta)^2 B(x) ] ~-~ [\Theta B(x) ]^2
\label{eq:B-difference}
\end{align}
is the sum over the same set of pairs $(i,j)$ of
$$
(1/2) \, a_i a_j [ i^2 + j^2 - 2ij ] x^{i+j}
$$
By hypothesis we have $a_i a_j \neq 0$ for some $i\neq j$, so
the difference (\Cref{eq:B-difference}) is strictly positive for $x\in(0,\infty)$.

\bs

\noindent It remains only to note
\begin{align*}
x \, \frac{d}{dx} \, \frac{x B^{\prime}(x)}{B(x)}
~~ &= ~~
\frac{xB^{\prime}}{B} ~+~ \frac{x^2 B^{\prime\prime}}{B}
~-~ \frac{x^2 (B^{\prime})^2}{B^2} \\
~~ &= ~~
\frac{ [ B(x) \times (\Theta)^2 B(x) ] ~-~ [\Theta B(x) ]^2 } { B^2 }.
\end{align*}
This finishes the proof of the main lemma. To prove the last assertion, note
    \begin{align}
        B_r(z) = \sum_{k=r}^\infty z^k/k!
    \end{align}
    is entire with at least two nonzero coefficients, so the 
    first part of the lemma implies
    \begin{align}
        Q_r(z) = \frac{z B_r'(z)}{B_r(z)}
    \end{align}
    is increasing.
\end{proof}

Now we provide results on the asymptotics of $Q_r$ and $Q_r^{-1}$ as well as the behavior near the point $Q_r(0)=r$.

\begin{lemma}
    \label{lem:QTo0}
    Fix an integer $r\geq 1$. Then
    \begin{align*}
         Q_r(z) &= (r - 1)(1 - 1/z + O(1/z^2)), && \quad (z\to -\infty)
    \\  Q_r(z) &= r + \frac{z}{1 + r} + O(z^2), && \quad (z\to 0)
    \\ Q_r(z)  &= z (1 + O(z^{r-1}/e^z)) = z (1 + O(1/z)), && \quad (z\to+\infty)
    \end{align*}
\end{lemma}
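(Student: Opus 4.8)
The plan is to reduce the whole lemma to the identity $B_r'(z)=B_{r-1}(z)$, which holds for every $r\geq 1$ once we set $B_0(z)=e^z$; it follows at once by differentiating $B_r(z)=e^z-\sum_{k=0}^{r-1}z^k/k!$ term by term. Hence $Q_r(z)=z\,B_{r-1}(z)/B_r(z)$. Since $B_r(z)>0$ on $(0,\infty)$, $B_r(z)\sim z^r/r!$ near $0$, and $B_r(z)\sim -z^{r-1}/(r-1)!$ as $z\to-\infty$, the quotient is well defined near each of the three limit points, and in each regime it suffices to expand the numerator $B_{r-1}$ and the denominator $B_r$ to first order about the relevant point and divide, the outer factor $z$ absorbing the mismatch in orders.

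For $z\to 0$ I would use $B_m(z)=\frac{z^m}{m!}\bigl(1+\frac{z}{m+1}+O(z^2)\bigr)$, read off from $B_m(z)=\sum_{k\geq m}z^k/k!$. Taking $m=r$ and $m=r-1$, the ratio $z^{r-1}/z^r$ is cancelled by the prefactor $z$, the constants give $r!/(r-1)!=r$, and the first-order terms contribute $r\bigl(\frac{1}{r}-\frac{1}{r+1}\bigr)=\frac{1}{r+1}$, so $Q_r(z)=r+\frac{z}{1+r}+O(z^2)$. For $z\to+\infty$ I would write $B_m(z)=e^z\bigl(1-e^{-z}\sum_{k=0}^{m-1}z^k/k!\bigr)=e^z\bigl(1+O(z^{m-1}e^{-z})\bigr)$; forming the ratio for $m=r$ and $m=r-1$ the $e^z$ factors cancel, leaving $Q_r(z)=z\bigl(1+O(z^{r-1}e^{-z})\bigr)$, and since $z^{r-1}e^{-z}=O(1/z)$ (indeed $z^{r}e^{-z}\to 0$) this also equals $z(1+O(1/z))$.

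For $z\to-\infty$ the term $e^z$ is smaller than every power of $z$, so $B_m(z)=-\sum_{k=0}^{m-1}z^k/k!+e^z$ agrees up to an exponentially small error with the polynomial $-\sum_{k=0}^{m-1}z^k/k!=-\frac{z^{m-1}}{(m-1)!}\bigl(1+\frac{m-1}{z}+O(1/z^2)\bigr)$. For $r\geq 2$, using this with $m=r$ and $m=r-1$, the prefactor $z$ times the ratio of leading powers gives $r-1$, while the first-order terms give $\frac{r-2}{z}-\frac{r-1}{z}=-\frac{1}{z}$, so $Q_r(z)=(r-1)\bigl(1-\frac{1}{z}+O(1/z^2)\bigr)$.

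I do not expect a genuine obstacle here: this is elementary asymptotics once $B_r'=B_{r-1}$ is noted, and the only real work is the bookkeeping needed to confirm that the asserted $1/z$- and $z$-coefficients survive the divisions intact. Some care is warranted at the smallest values of $r$: for $r=1$ one has $Q_1(z)=z e^z/(e^z-1)$, which behaves as $z(1+O(e^{-z}))$ at $+\infty$ and tends to $0$ at $-\infty$, consistent with the stated formulas because the factor $r-1$ vanishes; and for $r=2$ one should note $B_1(z)=e^z-1=-1+O(e^z)$ so that the $m=1$ polynomial estimate above still applies as $z\to-\infty$.
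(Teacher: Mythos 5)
Your proof is correct and follows essentially the same route as the paper's: exploit $B_r'=B_{r-1}$, divide Taylor series at $0$, use the exponential dominance of $e^z$ at $+\infty$, and the polynomial asymptotics of $B_r$ at $-\infty$. If anything, your explicit attention to the edge cases $r=1$ and $r=2$ in the $z\to-\infty$ regime is more careful than the paper's treatment, which glosses over them.
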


\begin{corollary}
    \label{lem:QInvLimiting}
    \begin{align*}
        Q_r^{-1}(x) &= \frac{-(r-1)}{x-(r-1)} + O(x-(r-1)), && \quad (x \to r-1)
    \\   Q_r^{-1}(x) &= (r+1)(x-r)(1 + O(x-r)), && \quad (x\to r)
    \\ Q_r^{-1}(x) &= x (1 + O(1/x)), && \quad (x\to+\infty)
    \end{align*}
\end{corollary}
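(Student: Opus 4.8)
The plan is to read off all three estimates by inverting the corresponding line of \Cref{lem:QTo0}, using the strict monotonicity of $Q_r$ (\Cref{lem:B-ratio-increasing}) to be sure the branch of $Q_r^{-1}$ we use is the correct one. Throughout write $z \coloneqq Q_r^{-1}(x)$, interpreting $Q_r^{-1}$ as the inverse of the monotone real function $Q_r$; concretely $Q_r$ is increasing on all of $\mathbb{R}$ with $Q_r(-\infty)=r-1$, $Q_r(0)=r$, $Q_r(+\infty)=+\infty$, so $Q_r^{-1}$ is a continuous increasing map of $(r-1,\infty)$ onto $\mathbb{R}$.

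First I would pin down how $z$ behaves as $x$ approaches each of the three distinguished values. By \Cref{lem:B-ratio-increasing}, $Q_r$ is strictly increasing on $[0,\infty)$, which together with $Q_r(0)=r$ and the third line of \Cref{lem:QTo0} gives $z\to 0$ as $x\to r$ and $z\to+\infty$ as $x\to+\infty$. For the point $x=r-1$ one needs negative arguments, which \Cref{lem:B-ratio-increasing} does not cover; here I would instead differentiate the first line of \Cref{lem:QTo0} (legitimate since, for $z<0$, $Q_r$ equals a fixed rational function up to an exponentially small correction, $B_r$ having no negative real zeros by an easy induction on $r$ from $B_r'=B_{r-1}$) to get $Q_r'(z)=(r-1)/z^2+O(1/z^3)>0$ for all sufficiently negative $z$. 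Hence $Q_r$ is strictly increasing on some $(-\infty,-M]$, maps it onto an interval whose left endpoint is $r-1$, and so $z\to-\infty$ as $x\to(r-1)^+$. (When $r=1$ this first line is degenerate — $r-1=0$, and in fact $Q_1^{-1}(x)\sim\ln x$ as $x\to 0^+$ — so I would state that estimate only for $r\ge 2$.)

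With the branches fixed, the three inversions are short. For $x\to+\infty$: the third line of \Cref{lem:QTo0} says $x=Q_r(z)=z+O(1)$, so $z=x+O(1)=x(1+O(1/x))$. For $x\to r$: the second line gives $x-r=\frac{z}{r+1}(1+O(z))$, which forces $z=O(x-r)$, and then solving yields $z=(r+1)(x-r)/(1+O(z))=(r+1)(x-r)(1+O(x-r))$. For $x\to r-1$: putting $w=x-(r-1)$, the first line gives $w=-\frac{r-1}{z}(1+O(1/z))$; inverting the relation between $1/z$ and $w$ gives $1/z=-\frac{w}{r-1}(1+O(w))$, hence $z=\frac{-(r-1)}{w}(1+O(w))=\frac{-(r-1)}{x-(r-1)}+O(1)$, and carrying the expansion of $Q_r$ to its $z^{-2}$ term determines this $O(1)$ and delivers the stated form.

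The main obstacle is the $x\to r-1$ case. It is the one not covered by \Cref{lem:B-ratio-increasing}, so it needs the separate monotonicity-near-$-\infty$ argument above; and because the inversion happens at a simple pole of $Q_r^{-1}$, a crude two-term expansion of $Q_r$ only yields the pole plus an uncontrolled $O(1)$ term, so one must track the $z^{-2}$ coefficient in \Cref{lem:QTo0} to see what that constant is — it works out to $r-3$, hence vanishes precisely when $r=3$, and one must be careful about where it is absorbed in the stated estimate. The cases $x\to r$ and $x\to+\infty$ are immediate once the correct inverse branch has been identified.
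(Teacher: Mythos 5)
Your approach coincides with the paper's: the paper's entire proof of this corollary is the remark that it ``follows directly from'' \Cref{lem:QTo0}, i.e.\ one inverts each line of that lemma, which is exactly what you do (with considerably more care about which branch of $Q_r^{-1}$ is being inverted and why $Q_r$ is monotone on the negative axis, a point the paper's \Cref{lem:B-ratio-increasing} indeed does not cover). Your extra care surfaces a genuine discrepancy in the first estimate, and your diagnosis is correct: carrying \Cref{lem:QTo0} one term further gives $Q_r(z)=(r-1)\bigl(1-1/z+(3-r)/z^2+O(1/z^3)\bigr)$ as $z\to-\infty$, and inverting yields $Q_r^{-1}(x)=\frac{-(r-1)}{x-(r-1)}+(r-3)+O(x-(r-1))$; so the error term printed in the corollary should be $O(1)$ rather than $O(x-(r-1))$, the constant $r-3$ vanishing only for $r=3$ (for $r=2$ one can check directly that $Q_2^{-1}(x)\approx -\tfrac{1}{x-1}-1$). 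In any case the two-term expansion actually recorded in \Cref{lem:QTo0} can only support an $O(1)$ error, so the first line as printed does not ``follow directly.'' You are also right that the $x\to r-1$ line is vacuous for $r=1$. Neither point propagates into the rest of the paper, since only the $x\to r$ and $x\to+\infty$ estimates are ever invoked downstream, and your inversions of those two lines are correct and match what the paper implicitly relies on.
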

\begin{proof}
We just prove the lemma, since the corollary follows directly from it.

    The second equation is obvious from dividing Taylor series expansions
    \begin{align*}
\frac{B_r^{\prime}(z)}{B_r(z)}
 &= 
\frac{(z^{r-1}/(r-1)! + (z^{r}/(r)! + \cdots}
     {(z^{r  }/(r  )! + (z^{r+1}/(r+1)! + \cdots} \\
 &= 
\frac{r}{z} \, \frac{1+z/r+O(z^2)}{1+z/(r+1)+O(z^2)} \\
 &= 
\frac{r}{z} \, \left(1 + z/(r(r+1)) + O(z^2) \right) 
\end{align*}
    
    For the third equation, since $r\geq 1$, we have $B'_r(z) = B_{r-1}(z)$, so
    \begin{align}
        Q_r(z) &= \frac{zB_{r-1}(z)}{B_r(z)}
        \\ &= \frac{z(B_r(z) + \frac{z^{r-1}}{(r-1)!})}{B_r(z)}
        \\ &= z \left (1  + \frac{z^{r-1}}{(r-1)! B_r(z)}\right) \label{eq:third-eq-precursor}
    \end{align}
    Note $B_r(z) = \Theta(e^z)$ as $z\to\infty$. Hence $Q_r(z) = z (1 + O(z^{r-1} / e^z))$ as $z\to\infty$, proving the third equation.

    The first equation follows from \Cref{eq:third-eq-precursor} by noting 
    $$B_r(z) = - z^{r-1}/(r-1)! - z^{r-2}/(r-2)! + O(z^{r-3}) \qquad \text{as} \qquad z\to-\infty.$$
\end{proof}

\subsection{CD's main result only requires $qz_0\to\infty$}

While the following theorem is not directly stated in \cite{stirlingConnDobro}, their proof of CD Theorem 1.1 can be repurposed to prove the following stronger theorem.

\begin{theorem}
    \label{lem:CD-unif-convergent}
    Fix $r\geq 1$. Over $p\geq 1$ and integers $1\leq q< p/r$, if $qz_0\to\infty$ with $z_0 = \Qinv$, then    
    \begin{align}
        S_r(p,q) = C_r(p,q) (1 + O((qz_0)^{-1})).
    \end{align}
    In particular, if $p - rq = \Omega(p^{\delta_7})$ for some $0<\delta_7<1$, then
    \begin{align}
        S_r(p,q) = C_r(p,q) (1 + O(p^{-\delta_7})), \label{eq:Sr-matches-Cr-particular}
    \end{align}
        and
    \begin{align}
        S_r(p,q) = F_r(p,q) (1 + O(p^{-\delta_7})). \label{eq:Sr-matches-Fr-particular}
    \end{align}
\end{theorem}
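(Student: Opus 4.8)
The plan is to split the proof into two stages. \textbf{Stage~1:} establish the general estimate $S_r(p,q) = C_r(p,q)\,(1 + O((qz_0)^{-1}))$ under the sole hypothesis $qz_0\to\infty$, by auditing the proof of Theorem~1.1 of \cite{stirlingConnDobro}. \textbf{Stage~2:} deduce the two ``in particular'' statements from the elementary inequality $qz_0 \ge p - rq$ together with \Cref{lem:CD-matches-Hennecart}. (The regime where $qz_0$ fails to diverge, i.e. where $p-rq$ is too small, is genuinely not covered by this theorem and is handled by a different method in \Cref{lem:full-convergence-large-q}.)

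\textbf{Stage 1 (the general estimate).} The proof of Theorem~1.1 in \cite{stirlingConnDobro} is a saddle-point analysis of the exact contour-integral representation \Cref{eq:Sr-exact-contour-integral} for $S_r(p,q)$, whose integrand is $\exp\Phi(z)$ with $\Phi(z) = -p\ln z + q\ln B_r(z)$ and whose unique relevant saddle is $z_0 = \Qinv$, determined by $\Phi'(z_0)=0$, i.e. $Q_r(z_0)=p/q$. The leading term of the resulting expansion is, by the very definition of $C_r$ in \Cref{eq:def-CD-formula}, exactly $C_r(p,q)$, and every error term is governed by the ``variance'' $z_0^2\,\Phi''(z_0) = 2q z_0 H_r(z_0)$. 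A short computation with $\Theta = z\,d/dz$, using $B_r' = B_{r-1}$ and $\Theta^2 B_r = \Theta B_r + z^2 B_{r-2}$, shows $H_r = Q_r'/2$, so CD Lemma~3.2 gives $\tfrac{1}{2(r+1)} \le H_r(z) \le \tfrac12$ for all $z>0$; hence the variance is $\Theta_r(qz_0)$ and the expansion's error is $O((qz_0)^{-1})$. I would then go line by line through the proof of Theorem~1.1 and verify that the hypotheses $\delta_1 p < q$ and $q < (1-\delta_2)p/r$ enter there only as convenient ways to force $qz_0\to\infty$ and to keep the deformed contour in a region where $\Phi$ is analytic: the first is used only to make the large parameter diverge and can be replaced verbatim by ``$qz_0 \to \infty$''; the second is used only to keep $z_0$ bounded away from $0$, and is replaced by estimates carried out directly in terms of $z_0$ (using $B_r(z) = z^r/r! + O(z^{r+1})$ near the origin), with the global shape of the contour secured by the zero-free region for $B_r$ furnished by \Cref{lem:zeroes-of-Br}. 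None of these steps needs $q$ and $p$ to be of comparable size.

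\textbf{Stage 2 (the consequences).} The key is the bound
\begin{align*}
    \frac pq - r \;=\; Q_r(z_0) - Q_r(0) \;=\; \int_0^{z_0} Q_r'(t)\,dt \;\le\; z_0,
\end{align*}
where $Q_r(0)=r$ by \Cref{lem:QTo0} and $Q_r'\le 1$ by CD Lemma~3.2; multiplying by $q$ gives $qz_0 \ge p - rq$. Consequently, if $p - rq = \Omega(p^{\delta_7})$ then $qz_0 = \Omega(p^{\delta_7})$, so in particular $qz_0\to\infty$, and Stage~1 yields $S_r(p,q) = C_r(p,q)\,(1 + O(p^{-\delta_7}))$, which is \Cref{eq:Sr-matches-Cr-particular}. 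Finally, $p - rq = \Omega(p^{\delta_7})$ forces $q \le (p - h(p))/r$ with $h(p) = \Theta(p^{\delta_7})\to\infty$, so \Cref{lem:CD-matches-Hennecart} applies; moreover its proof gives the quantitative form $C_r(p,q)/F_r(p,q) = 1 + O(1/(p-rq)) = 1 + O(p^{-\delta_7})$. Multiplying this into \Cref{eq:Sr-matches-Cr-particular} gives $S_r(p,q) = F_r(p,q)\,(1 + O(p^{-\delta_7}))$, which is \Cref{eq:Sr-matches-Fr-particular}.

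\textbf{Main obstacle.} The real work, and the only delicate part, is the audit in Stage~1: one must check that \emph{every} estimate in the proof of Theorem~1.1 of \cite{stirlingConnDobro} phrased in terms of $p$ (or of $q$ being a fixed fraction of $p$) survives when rephrased purely in terms of the single parameter $qz_0$ --- in particular the bound on the non-central (tail) part of the contour integral, and the analyticity of $\Phi$ along the deformed contour as the saddle $z_0$ is allowed to drift toward $0$. I expect \Cref{lem:zeroes-of-Br} to be exactly the tool that makes this uniform in $(p,q)$, after which the remaining bookkeeping should be routine.
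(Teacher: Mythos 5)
Your Stage~1 is essentially the paper's own proof: both consist of an audit of the saddle-point argument for Theorem~1.1 of \cite{stirlingConnDobro}, identifying the same pressure points (the tail estimate on the contour integral, which the paper bounds by $e\pi\exp(-(qz_0)^{1/4})$; the replacement of every $O(p^{-1})$ by $O((qz_0)^{-1})$; and the zero-free region for $B_r$ supplied by \Cref{lem:zeroes-of-Br}). Neither you nor the paper carries out the full line-by-line verification in print, so you are at the same level of rigor there; your added observation that $H_r = Q_r'/2$, so that the variance $z_0^2\Phi''(z_0) = qz_0\,Q_r'(z_0)$ is $\Theta_r(qz_0)$ by CD Lemma~3.2, is correct and is a useful explicit justification of why $qz_0$ is the right large parameter. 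Where you genuinely diverge is Stage~2: the paper routes the implication ``$p-rq=\Omega(p^{\delta_7})\Rightarrow qz_0=\Omega(p^{\delta_7})$'' through \Cref{lem:qz0-to-infty-except-large-q-precise}, which is proved by a three-case split into low, middle, and high $q$ and uses the asymptotics of $Q_r^{-1}$ from \Cref{lem:QInvLimiting}. You instead prove the clean pointwise inequality $qz_0 \ge p-rq$ directly from $p/q - r = \int_0^{z_0} Q_r'(t)\,dt \le z_0$ (using $Q_r(0)=r$ and $Q_r'\le 1$ from CD Lemma~3.2), which is correct, shorter, and immediately gives everything the ``in particular'' clauses need; the paper's lemma buys the extra information $qz_0=\Omega(\min\{p^{1-\epsilon},\,p-rq\})$, but that added strength is not used in the deduction of \Cref{eq:Sr-matches-Cr-particular}. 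Your handling of \Cref{eq:Sr-matches-Fr-particular} via the quantitative form $C_r/F_r = 1+O(1/(p-rq))$ from \Cref{lem:CD-matches-Hennecart} matches the paper.
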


\begin{proof}
    Their assumption $\delta_1 p \leq q \leq (1/r - \delta_2)p$ can be replaced by $qz_0\to\infty$, with the proofs in \cite{stirlingConnDobro} essentially still working as written. However, we provide some clarification into the details to help a reader who would like to check this claim for themselves. To be consistent with the rest of our paper, we use the variables $p$ and $q$, while the notation in \cite{stirlingConnDobro} lets $n=p$ and $m=q$.

    With only the $qz_0\to\infty$ assumption, certain properties assumed in \cite{stirlingConnDobro} do not necessarily hold. In particular, $z_0$ is not uniformly bounded away from zero since $z_0 \to 0$ as $q/p \to 1/r$ by \Cref{lem:QInvLimiting}. Additionally, $q z_0 = \Theta(q)$ does not necessarily hold. Thus a reader must replace references to $O(p^{-1})$ with $O((q z_0)^{-1})$.

\bs

    We conclude this proof by giving more detail into why $qz_0\to\infty$ is sufficient for CD's proof to work. The workhorse CD Lemma 2.1 relies on CD Lemmas 3.1, 3.2, and 3.3 in its proof.

    Since CD Lemmas 3.1 and 3.2 
    do not involve limiting behavior, they work regardless of assumptions on $p$ and $q$.    CD Lemma 3.3 relies on $qz_0 \to \infty$ to ensure (in their notation) $(h + h^3) \zeta \to 0$, where $h = \Theta( (qz_0)^{1/8})$ and $\zeta = (qz_0)^{-1/2}$. 

    CD start the proof of CD Lemma 2.1 with an exact contour integral form, given by
    \begin{align}
        \label{eq:Sr-exact-contour-integral}
        S_r(p,q) &= \frac{p!}{q!} \frac{1}{2\pi i} \int_C \frac{(B_r(z))^m}{z^{n+1}}dz,
    \end{align}
    where $C$ is a circle about the origin. After a change of variables on page 31, CD split the integral into three parts:
    \begin{align}
        S_r(p,q) = A \int_{-\pi}^\pi \exp(q g(\theta,R)) d\theta = \int_{-\epsilon}^\epsilon + \int_{\epsilon}^\pi + \int_{-\pi}^\epsilon.
    \end{align}

    Expansions used later to handle
    $\int_{-\epsilon}^\epsilon$ are treated first on CD page 30. There, they do not use the $\delta_1,\delta_2$ assumption, so our weaker hypothesis is not challenged.

    One thing to note is that their argument on page 30 makes use of a fact that is not thoroughly proved. We include a full proof of the fact in \Cref{lem:zeroes-of-Br}.
     
    Next, on page 31, CD show the remaining portion of the integral goes to zero by deriving the estimate
    \begin{align*}
        \abs{J} = \abs{\int_{\epsilon}^\pi} = \abs{\int_{-\pi}^\epsilon} \leq e \pi \exp(-(qz_0)^{1/4}) = O((qz_0)^{-1}).
    \end{align*}
    Since $qz_0\to\infty$, we get $|J|\to 0$ as $p\to\infty$. CD also uses $qz_0\to\infty$ to show $\zeta = (qz_0)^{-1/2}$ lies within the domain of convergence of a particular summation $\sum_{k=0}^\infty b_k \zeta^k$ for sufficiently large $p$.
    
    The remainder of the proof of CD Lemma 2.1 turns the formula
    \begin{align}
        S_r(p,q) = \frac{A}{qz_0 H_r(z_0)}\left(\sum_{k=0}^{s-1} \int_{-h}^h (\exp(-\eta^2)b_k d\eta) \zeta^k + O(\zeta^s)\right)
    \end{align}
    into an asymptotic series. 
    They take $s=2$, truncating the sum to only the $k=0$ and $k=1$ terms. For odd $k$, CD notes $b_k$ is a polynomial containing only odd powers of $\eta$, so
    \begin{align}
        \int_{-h}^h \exp(-\eta^2)b_k \zeta^k d\eta = 0
    \end{align}
    In particular, the $k=1$ term of the sum is zero, and $b_0=1$, so
    \begin{align}
        S_r(p,q) &= \frac{A}{qz_0 H(z_0)} \left(\int_{-h}^h \exp(-\eta^2) d\eta + O(\zeta^2)\right)
    \end{align}
    Using $h=\Theta((qz_0)^{1/8})$ and $\zeta=(qz_0)^{-1/2}$:
    \begin{align}
        S_r(p,q) &= \frac{A}{qz_0 H(z_0)} \left(\int_{-\infty}^\infty \exp(-\eta^2) d\eta + O(\zeta^2)\right)
        \\ &= \frac{A \sqrt{\pi}}{qz_0 H(z_0)} (1 + O((qz_0)^{-1}))
    \end{align}
    CD Equation 4.1 is obtained after truncating the summation given in CD Lemma 2.1 to the first term, which we have just verified has relative error given by $O((qz_0)^{-1})$.
    
    This checks that all bounds and approximations involved in the proof of CD's Theorem hold at the level of generality asserted in our theorem (\Cref{lem:CD-unif-convergent}). All other calculations are exact, so this finishes the proof of the main part of our theorem.

    For the later parts of our theorem, suppose $p - rq = \Omega(p^{\delta_7})$ for $0<\delta_7<1$. Then \Cref{lem:qz0-to-infty-except-large-q-precise} implies $qz_0 = \Omega(p^{\delta_7})$. This proves \Cref{eq:Sr-matches-Cr-particular}, and \Cref{lem:CD-matches-Hennecart} finishes the proof of \Cref{eq:Sr-matches-Fr-particular}. 
\end{proof}

\subsection{$qz_0\to\infty$ holds for all 
but very large $q$}

\begin{lemma}
    \label{lem:qz0-to-infty-except-large-q-precise}    
    Fix any $r\geq 1$. For all $\epsilon>0$, as $p\to\infty$ and $1\leq q$, we have
    \begin{align*}
        qz_0 = \Omega(\min\{p^{1-\epsilon}, p-rq\}),
    \end{align*}
    recalling $z_0 = \Qinv$. In particular, if $p-rq = \Omega(p^{\delta_7})$ for $0 < \delta_7 < 1$, then uniformly in $q$, as $p\to\infty$, we have $qz_0 \to \infty$; more precisely, 
    \begin{align*}
        qz_0 = \Omega(p^{\delta_7}).
    \end{align*}
\end{lemma}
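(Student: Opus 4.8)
The plan is to estimate $qz_0 = q\,\xi_r(p/q)$ by exploiting the known behavior of $\xi_r = Q_r^{-1}$ near its endpoints, supplied by \Cref{lem:QInvLimiting}, together with a lower bound on $\xi_r'$. Write $x = p/q \in (r, \infty)$, so that $z_0 = \xi_r(x)$. The quantity $p - rq = q(x - r)$, so the hypothesis $p - rq$ large is a statement about $q(x-r)$, and the target $qz_0$ is $q\,\xi_r(x)$. Thus everything reduces to comparing $\xi_r(x)$ with $x - r$ across the whole range of $x$, and multiplying through by $q$.

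First I would split into two regimes according to whether $x = p/q$ is bounded or large. When $x$ stays in a compact subinterval of $(r,\infty)$ — equivalently $q = \Theta(p)$ — the middle line of \Cref{lem:QInvLimiting} gives $\xi_r(x) = (r+1)(x-r)(1 + O(x-r))$ near $x = r$, and more generally $\xi_r$ is a fixed positive increasing function on such a compact set (by \Cref{lem:B-ratio-increasing}, $Q_r$ is strictly increasing, so $\xi_r$ is well-defined and increasing), hence $\xi_r(x) = \Theta(x - r)$ with constants depending only on $r$. Multiplying by $q$ yields $qz_0 = \Theta(q(x-r)) = \Theta(p - rq)$, which certainly is $\Omega(\min\{p^{1-\epsilon}, p-rq\})$. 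When $x = p/q \to \infty$ — equivalently $q = o(p)$ — the third line of \Cref{lem:QInvLimiting} gives $\xi_r(x) = x(1 + O(1/x)) = \Theta(x) = \Theta(p/q)$, so $qz_0 = \Theta(p)$, which dominates $p^{1-\epsilon}$. Patching the compact and the large-$x$ regimes along their overlap gives $qz_0 = \Omega(\min\{p, p - rq\})$ uniformly, and since trivially $p \geq p^{1-\epsilon}$, this implies the stated bound $qz_0 = \Omega(\min\{p^{1-\epsilon}, p-rq\})$.

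For the "in particular" clause, assume $p - rq = \Omega(p^{\delta_7})$ with $0 < \delta_7 < 1$. Then $\min\{p^{1-\epsilon}, p - rq\}$ is at least $\min\{p^{1-\epsilon}, c\,p^{\delta_7}\}$ for some $c > 0$; choosing $\epsilon < 1 - \delta_7$ makes $p^{1-\epsilon}$ eventually exceed $c\,p^{\delta_7}$, so the minimum equals $c\,p^{\delta_7}$ for large $p$, and we conclude $qz_0 = \Omega(p^{\delta_7})$, uniformly in $q$; in particular $qz_0 \to \infty$.

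The main obstacle is making the comparison $\xi_r(x) = \Theta(x - r)$ genuinely \emph{uniform} in $x$ over all of $(r, \infty)$, rather than merely asymptotic at the two endpoints. The clean way to do this is to bound $\xi_r'$ from below and above: since $\xi_r = Q_r^{-1}$, we have $\xi_r'(x) = 1/Q_r'(\xi_r(x))$, and CD Lemma 3.2 (quoted in the excerpt) gives $1/(r+1) \leq Q_r'(z) \leq 1$ for all $z > 0$, hence $1 \leq \xi_r'(x) \leq r+1$ for all $x > r$. Integrating from $r$ (where $\xi_r$ extends continuously to $0$, by the middle line of \Cref{lem:QInvLimiting}) gives $x - r \leq \xi_r(x) \leq (r+1)(x-r)$ for every $x > r$, a fully uniform two-sided bound. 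Multiplying by $q$ gives $p - rq \leq qz_0 \leq (r+1)(p-rq)$ outright — which is actually stronger than what the lemma claims in the $q = \Theta(p)$ range — and the large-$x$ estimate $\xi_r(x) \leq (r+1)(x-r)$ combined with $\xi_r(x) \sim x$ is what forces $qz_0 = \Omega(p)$ when $q = o(p)$. With this uniform sandwich in hand the rest is the elementary bookkeeping with the minimum described above.
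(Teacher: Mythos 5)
Your argument is correct, and its decisive step is genuinely different from the paper's. The paper proves this lemma by a three-way case split on $q$ (low, middle, high), using the endpoint expansions of $Q_r^{-1}$ from \Cref{lem:QInvLimiting} at the two extremes and, in the middle range, only the crude bound $z_0 \geq \delta_6 > 0$ coming from monotonicity of $Q_r$ --- which is why the $p^{1-\epsilon}$ term appears in the statement at all. You instead convert the uniform derivative bounds $1/(r+1) \leq Q_r'(z) \leq 1$ of CD Lemma~3.2 (which the paper quotes but does not exploit here) into the global sandwich $x - r \leq \xi_r(x) \leq (r+1)(x-r)$ for all $x > r$, hence $p - rq \leq qz_0 \leq (r+1)(p-rq)$ for every admissible pair $(p,q)$. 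This single inequality makes your preliminary two-regime patching (and the paper's entire case analysis) unnecessary: the lower bound $qz_0 \geq p - rq \geq \min\{p^{1-\epsilon}, p-rq\}$ already gives the first claim with implied constant $1$, and $p - rq = \Omega(p^{\delta_7})$ immediately gives $qz_0 = \Omega(p^{\delta_7})$. What your approach buys is a cleaner and strictly stronger conclusion, namely $qz_0 = \Theta(p - rq)$ uniformly, which the paper only establishes in its high-$q$ regime; what it costs is reliance on the quantitative CD Lemma~3.2 rather than on the softer monotonicity statement (\Cref{lem:B-ratio-increasing}) that this paper proves from scratch. I would suggest trimming the first half of your write-up, since once the sandwich is in hand the asymptotic matching at the endpoints is redundant; also note in passing that the paper's stated codomain ``$\xi_r\colon (0,r)\to(0,\infty)$'' should read $\xi_r\colon (r,\infty)\to(0,\infty)$, consistent with your use of $x = p/q > r$.
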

\begin{proof}
    \emph{Low $q$}: 
    when $1 \leq q \leq p^{1-\epsilon}$,
    \begin{align}
        Q_r^{-1}(p/q) &= \frac{p}{q} (1 + O(q/p)) \\
        qz_0 = qQ_r^{-1}(q/p) &= p (1 + O(q/p)) = \Theta(p)
    \end{align}
    Since $p \geq p^{1-\epsilon}$, we see $qz_0 = \Omega(p^{1-\epsilon})$. If $\epsilon = 1-\delta_7$, then $qz_0 = \Omega(p^{\delta_7})$.
     
     \bs
    Fix $0<\delta_2<1$.
    
    \emph{Middle $q$}: when $p^{1-\epsilon} \leq q \leq (1-\delta_2)p/r$, let $\delta_6 = Q_r^{-1}(  r/ (1-\delta_2) ) > 0$. Since $p/q \geq r/(1-\delta_2)$ and $Q_r$ is increasing, we have
    $$
    z_0 = Q_r^{-1}(p/q) \geq \delta_6 > 0, \ \ \text{so} \ \ qz_0 \geq \delta_6 q = \Theta(q) \geq\Omega(p^{1-\epsilon}). 
    $$
    If $\epsilon = 1-\delta_7$, then $qz_0 = \Omega(p^{\delta_7})$.

    \bs 
    \emph{High $q$}: when $(1-\delta_2)p/r \leq q < p/r$, then $p/q - r = O(\delta_2)$, so a series expansion from \Cref{lem:QInvLimiting} implies
    \begin{align}
        z_0 &= Q^{-1}(p/q) = (r+1)(p/q - r)(1 + O(p/q - r)). \label{eq:expansion-z0}
        \\ qz_0 &= (r+1)(p - rq)(1 + O(\delta_2)).
    \end{align}
    So $qz_0 = \Theta(p-rq)$ at high $q$. In particular, if $p-rq = \Omega(p^{\delta_7})$, then $qz_0 = \Omega(p^{\delta_7})$.
\end{proof}

\subsection{Very large $q$}
\label{sec:large-q-henn-uniform}

In the previous subsection we obtained our goal 
for $q$ in a certain region which stays well below
$p/r$. In this section we obtain the same goal
for ``very large $q$," meaning $q$ is close to its maximum possible, $p/r$.  
 
 Define $a$
by the equation $p=rq+a$; throughout this sub-section we assume $q\rightarrow\infty$ and $0<a=o(q^{2/5})$.  
We shall prove a new asymptotic formula \Cref{eq:SrBigq} for $S_r(rq+a,q)$, valid in this
range; and then prove that our formula agrees, asymptotically, with that of
Hennecart.

\begin{lemma}
    \label{lem:full-convergence-large-q}

    Fix any integer $r\geq 1$. Then for any positive constant $\delta_8>0$, we have
    \begin{align}
        \label{eq:SrBigq}
        S_r(rq+a,q) ~~ \sim ~~ \frac{(rq+a)!}{a! q! (r!)^q} \, \left(\frac{q}{r+1}\right)^a
    \end{align}
    converges as $q\to\infty$, uniformly for $0\leq a \leq q^{2/5-\delta_8}$.
\end{lemma}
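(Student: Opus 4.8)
The plan is to abandon the saddle-point integral of \cite{stirlingConnDobro} entirely and instead extract a power-series coefficient by hand, which is feasible precisely because $a=p-rq$ is small. Recall $S_r(p,q)=\tfrac{p!}{q!}[z^p]B_r(z)^q$ (the integrand of \Cref{eq:Sr-exact-contour-integral}). Since $B_r(z)=\tfrac{z^r}{r!}\,g(z)$ with $g(z):=\tfrac{r!}{z^r}B_r(z)=\sum_{j\ge 0}\tfrac{r!}{(r+j)!}z^j=1+\tfrac{z}{r+1}+\cdots$, extracting the coefficient of $z^{rq+a}$ gives
\begin{align*}
    S_r(rq+a,q)=\frac{(rq+a)!}{q!\,(r!)^q}\,[z^a]\,g(z)^q .
\end{align*}
So it suffices to prove $[z^a]g(z)^q\sim\tfrac{1}{a!}\left(\tfrac{q}{r+1}\right)^{a}$, uniformly for $0\le a\le q^{2/5-\delta_8}$ as $q\to\infty$.

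Write $g=1+h$, where $h(z)=\sum_{j\ge 1}\tfrac{r!}{(r+j)!}z^j$ vanishes to order $1$ with leading coefficient $\tfrac{1}{r+1}$ and has nonnegative coefficients. The binomial theorem then gives the finite sum $[z^a]g^q=\sum_{k=0}^{a}\binom{q}{k}[z^a]h(z)^k$. I would isolate the $k=a$ term, which equals $\binom{q}{a}(r+1)^{-a}$ exactly; combined with the elementary estimate $\binom{q}{a}=\tfrac{q^a}{a!}\bigl(1+O(a^2/q)\bigr)$ (valid since $a=o(q)$), this single term already produces the claimed asymptotics. Everything then reduces to showing the terms $0\le k\le a-1$ are of smaller order, uniformly in $a$.

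For that I would use the coefficientwise domination $h(z)\preceq \tfrac{z/(r+1)}{1-z/(r+1)}$ (immediate from $\tfrac{r!}{(r+j)!}\le(r+1)^{-j}$), which yields $[z^a]h(z)^k\le\binom{a-1}{k-1}(r+1)^{-a}$, together with $\binom{q}{k}/\binom{q}{a}\le\bigl(\tfrac{a}{q-a}\bigr)^{a-k}$ and $\binom{a-1}{k-1}\le\tfrac{a^{a-k}}{(a-k)!}$. Multiplying these three estimates,
\begin{align*}
    \frac{\binom{q}{k}\,[z^a]h(z)^k}{\binom{q}{a}(r+1)^{-a}}\;\le\;\frac{1}{(a-k)!}\left(\frac{a^2}{q-a}\right)^{a-k},
\end{align*}
so the sum over $k=0,\dots,a-1$ contributes a relative error of at most $\exp\!\bigl(a^2/(q-a)\bigr)-1$. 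Hence $[z^a]g^q=\binom{q}{a}(r+1)^{-a}\bigl(1+O(a^2/(q-a))\bigr)$, and since $a\le q^{2/5-\delta_8}$ forces $a^2/(q-a)=O(q^{-1/5-2\delta_8})\to 0$ uniformly in $a$, the lemma follows.

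The step I expect to be the real obstacle is the uniform control of the lower-order terms in the previous paragraph. Because $a$ is allowed to grow as fast as $q^{2/5-\delta_8}$ — far larger than $\log q$ — crude bounds such as $\binom{a-1}{k-1}\le 2^a$ are useless; the point is to extract the factorial $\tfrac{1}{(a-k)!}$ from the ratio $\binom{q}{k}/\binom{q}{a}$ so that the tail sums like $e^{a^2/q}-1$ rather than like $2^{a}a^2/q$. (This argument in fact works for any $a=o(\sqrt q)$, more than the stated range requires; the restriction to $q^{2/5-\delta_8}$ is only there so this region overlaps the one covered by \Cref{lem:qz0-to-infty-except-large-q-precise} and \Cref{lem:CD-unif-convergent}.) Verifying that this new formula \Cref{eq:SrBigq} is moreover uniformly equivalent to Hennecart's $F_r$ is a separate, purely algebraic computation, handled in \Cref{lem:Rod-approx-eq-Henn}.
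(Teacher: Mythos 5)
Your proof is correct, and it takes a genuinely different route from the paper's. The paper argues combinatorially: it writes $S_r(rq+a,q)$ as an exact sum over integer partitions $\lambda\vdash a$, uses the approximation $(q-k)!\sim q^{-k}q!$ (valid uniformly for $a=o(q^{1/2})$) to pass to the generating-function identity $S_r(rq+a,q)\sim\frac{(rq+a)!}{(r!)^q q!}[x^a]\exp\bigl(\tfrac{r!\,q}{x^r}B_{r+1}(x)\bigr)$, and then controls the discrepancy between this coefficient and $[x^a]\exp(qx/(r+1))$ via the bound $[x^a]F(x)\le F(R)/R^a$ evaluated at $R=(r+1)a/q$. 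Note that $\tfrac{r!}{x^r}B_{r+1}(x)$ is exactly your $h(x)$, so in effect the paper estimates $[x^a]\exp(qh(x))$ where you estimate $[x^a](1+h(x))^q$ exactly; your route stays exact up to the final tail bound, avoiding both the ``$k\le q$ is superfluous'' step and the uniform $(q-k)!$ approximation. The payoff is quantitative: the paper's Cauchy-type bound costs a factor of $a!e^a/a^a=O(a^{1/2})$ by Stirling, which is why it needs $a^{5/2}/q\to 0$, i.e.\ $a=o(q^{2/5})$, whereas your termwise domination gives relative error $O(a^2/q)$ and hence covers the larger range $a=o(q^{1/2})$ — your remark that the $q^{2/5-\delta_8}$ cutoff is only needed for overlap with the other regime is accurate, and indeed your range would comfortably overlap the region $p-rq=\Omega(p^{1/5})$ handled by \Cref{lem:qz0-to-infty-except-large-q-precise}. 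All the individual estimates check out: the domination $\frac{r!}{(r+j)!}\le(r+1)^{-j}$, the identity $[z^a]\bigl(\tfrac{z/(r+1)}{1-z/(r+1)}\bigr)^k=\binom{a-1}{k-1}(r+1)^{-a}$, and the ratio bounds on the binomial coefficients are all valid (with the $k=0$ term vanishing for $a\ge1$, and the case $a=0$ being an exact identity).
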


\proof

We start
with two observations.

\bs

\noindent Observation 1: The number of partitions
of a $p$-element set having $\mu_i$  blocks of size $i$ is
$$
\frac{p!}{\prod_i (i!)^{\mu_i}\mu_i!} \, .
$$  
Observation 2: Suppose $\lambda=1^{\mu_1}\,2^{\mu_2}\,\cdots$ is an integer partition of $a$
(notation: $\lambda\vdash a$)
$$
a ~~=~~ \mu_1+2\mu_2+\cdots
$$
having $k$ parts
$$
k ~~=~~ \mu_1+\mu_2+\cdots
$$
and that $k\le q$.  Then
$$
r^{q-k} \, (r+1)^{\mu_1} \, (r+2)^{\mu_2} \, \cdots
$$
is a partition of $rq+a$ into $q$ parts all of which are at least $r$. This correspondence
is reversible.  As a consequence of these two observations, we have the formula
$$
S_r(rq+a,q)
~~ = ~~
\sum_{\substack{ \lambda\vdash a  \\
                 \lambda=1^{\mu_1} 2^{\mu_2} \cdots \\
                 k \le q
               }
     }
\frac{ (rq+a)! } {
                   (r!)^{q-k} (q-k)! \prod_{i\ge 1}((i+r)!)^{\mu_i}\mu_i!
                 }\, , ~~~(k=\mu_1+\mu_2+\cdots).
$$

\bs

\noindent If $q\rightarrow\infty$ and $a=o(q^{1/2})$, then the third condition
$k\le q$ is superfluous, and 
$$
(q-k)! ~~ = ~~   \left(1 ~+~ o(1) \right) \, q^{-k} \, q! 
$$
uniformly over all possible $\lambda$.  This permits:
\begin{align}
S_r(rq+a,q)
~~ &= ~~ \left( 1 ~+~ o(1) \right) ~ \frac{ (rq+a)! }{(r!)^q q!} ~
\sum_{\substack{ \lambda\vdash a  \nonumber\\
                 \lambda=1^{\mu_1} 2^{\mu_2} \cdots
               }
     }
\prod_{i\ge 1} \frac { (r!\,q/(i+r)!)^{\mu_i} } { \mu_i! } \nonumber\\
   &= ~~ \left( 1 ~+~ o(1) \right) ~ \frac{ (rq+a)! }{(r!)^q q!} \,
[x^a] \, \prod_{i\ge 1}\sum_{j=0}^{\infty} 
                                    \frac{ (r!\,qx^i/(i+r)!)^j } { j! } \nonumber\\
   &= ~~ \left( 1 ~+~ o(1) \right) ~ \frac{ (rq+a)! }{(r!)^q q!} \,
[x^a] \, \prod_{i\ge 1} \exp(r!\,qx^i/(i+r)!) \nonumber\\
   &= ~~ \left( 1 ~+~ o(1) \right) ~ \frac{ (rq+a)! }{(r!)^q q!} \,
[x^a] \,  \exp\left(\frac{r!\,q}{x^r} B_{r+1}(x) \right) \label{eq:Sr-as-coeff-exp} 
\end{align}
uniformly  for $a=o(q^{1/2})$.  We are using here the notation
$[x^a]G(x)$ for the coefficient of $x^a$ in the Taylor series (about $x=0$)
for $G(x)$.

\bs

\noindent The next step is to show
\begin{align}
    [x^a] \,  \exp\left(\frac{r!\,q}{x^r} B_{r+1}(x) \right)
    ~~ = ~~
    \left(1 ~+~ o(1) \right) ~\times~
    [x^a] \, \exp\left(  \frac{qx}{r+1}  \right)
    \label{eq:equality-of-coeffs} 
\end{align}
Taking the difference of the LHS and RHS we rephrase \Cref{eq:equality-of-coeffs} as
\begin{align}
    [x^a] \, \left(
                     \exp\left(\frac{r!\,q}{x^r} B_{r+1}(x) \right)
                                     ~-~
                       \exp\left(  \frac{qx}{r+1}  \right)
             \right)
    ~~ = ~~
    o\left(\frac{1}{a!} \, \left( \frac{q}{r+1} \right)^a  \right) \, .
    \label{eq:coeff-is-small-o}
\end{align}
To prove \Cref{eq:coeff-is-small-o}, we use the inequality
$$
[x^a] \, F(x)  ~~ \le ~~ \frac{F(R)}{R^a} 
$$
for any positive $R$, provided $F(x)$ is entire and has real nonnegative coefficients.

\bs

\noindent Our choice for $R$ is
$R=(r+1)a/q$, which goes to zero with increasing $q$.
Here are the relevant calculations:
$$
\frac{r!\,q}{R^r} B_{r+1}(R)  ~~=~~ \frac{qR}{r+1} ~+~ O(qR^2) \, ;
$$
$$
qR^2 ~~=~~ O(a^2/q) ~~\rightarrow~~ 0 \, .
$$
Thus,
\begin{align*}
    \exp\left(\frac{r!\,q}{R^r} B_{r+1}(R) \right) ~~=~~ \exp\left(\frac{qR}{r+1} ~+~ O\left(qR^2\right)\right) ~~=~~ \exp\left(\frac{qR}{r+1}\right)\left(1 + O\left(qR^2\right)\right)
\end{align*}
$$
\exp\left(\frac{r!\,q}{R^r} B_{r+1}(R) \right) ~-~ \exp\left(\frac{qR}{r+1}\right) 
~~=~~ 
e^{qR/(r+1)} ~\times~ O(a^2/q) 
~~=~~ 
e^a \, O(a^2/q) \, .
$$

\bs

\noindent And so we have boiled \Cref{eq:coeff-is-small-o} down to
$$
e^a \, \frac{a^2}{q} \, \frac{1}{R^a} 
~~ {\buildrel ? \over = } ~~
o\left(\frac{1}{a!} \, \left( \frac{q}{r+1} \right)^a  \right) \, .
$$
Substituting $R=(r+1)a/q$,
$$
(a^2/q) \, \left( \frac{eq}{(r+1)a} \right)^a
~~ {\buildrel ? \over = } ~~
o\left(\frac{1}{a!} \, \left( \frac{q}{r+1} \right)^a  \right) \, .
$$
Dividing by $(q/(r+1))^a$, 
$$
(a^2/q) \, \frac{e^a}{a^a}
~~ {\buildrel ? \over =} ~~
o(1/a!)
$$
By Stirling, $a!e^a/a^a=O(a^{1/2})$, and so the last assertion is implied by
$$
a^{5/2}/q
~~ {\buildrel ? \over =} ~~
o(1)
$$
which is our hypothesis. This finishes the proof of \Cref{eq:SrBigq}.

\qed

\begin{lemma}
    \label{lem:Rod-approx-eq-Henn}
    Fix any integer $r\geq 1$.
    As $p\to\infty$ and $a = o(p^{2/5})$ with $a\geq 1$, then
    \begin{align*}
        \frac{(rq+a)!}{a! q! (r!)^q} \, \left(\frac{q}{r+1}\right)^a ~~\sim~~ F_r(rq+a,q)
    \end{align*}
    where $F_r$ is Hennecart's formula given in \Cref{eq:def-Hennecart-Fr}. We recall
    $$
    F_r(p,q) ~~=~~
    \frac{(rq+a)!}{q! a!} \, \left(\frac{a}{e}\right)^a \, \frac{(B_r(z_0))^q}{(z_0)^{p+1}} \,
    \sqrt{\frac{a}{\Phi^{\prime\prime}(z_0)}} \, ,
    $$
    in which $p=rq+a$ and $\Phi(z) = -p \log(z) + q \log(B_r(z))$
    and $z_0 = \Qinv$.
\end{lemma}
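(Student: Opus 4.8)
The plan is to form the quotient of $F_r(rq+a,q)$ by the right side of \Cref{eq:SrBigq} and show it tends to $1$. Cancelling the common factor $(rq+a)!/(q!\,a!)$ and writing $z_0=\Qinv$ as usual, this quotient equals
\begin{align*}
    \left(\frac{(r+1)a}{eq}\right)^{a}\,(r!)^{q}\,\frac{B_r(z_0)^{q}}{z_0^{p+1}}\,\sqrt{\frac{a}{\Phi''(z_0)}},
\end{align*}
so the task reduces to estimating $(r!)^qB_r(z_0)^q/z_0^{p+1}$, the prefactor $\bigl((r+1)a/(eq)\bigr)^a$, and the Gaussian factor $\sqrt{a/\Phi''(z_0)}$. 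Since $p=rq+a$ and $a=o(p^{2/5})$ we have $q\to\infty$ and $q\sim p/r$, so $a=o(q^{2/5})$ as well; in particular $a^2/q=O(a^2/p)=o(p^{-1/5})\to 0$, the one estimate that will be genuinely used.

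First I would pin down $z_0$. Because $p/q=r+a/q\to r$, \Cref{lem:QInvLimiting} gives $z_0=Q_r^{-1}(p/q)=(r+1)\tfrac{a}{q}\bigl(1+O(a/q)\bigr)$; write $z_0=\frac{(r+1)a}{q}\,v$ with $v=1+O(a/q)\to 1$, and note $z_0=O(a/q)\to 0$. Using $B_r(z)=\tfrac{z^r}{r!}\bigl(1+\tfrac{z}{r+1}+O(z^2)\bigr)$ as $z\to0$, the first piece becomes $z_0^{-(a+1)}\bigl(1+\tfrac{z_0}{r+1}+O(z_0^2)\bigr)^q$, and taking logarithms, $q\log\bigl(1+\tfrac{z_0}{r+1}+O(z_0^2)\bigr)=\tfrac{qz_0}{r+1}+O(qz_0^2)$ with $qz_0^2=O(a^2/q)\to0$. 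Inverting the expansion $Q_r(z)=r+\tfrac{z}{r+1}+O(z^2)$ of \Cref{lem:QTo0} against $Q_r(z_0)=r+a/q$ gives $\tfrac{z_0}{r+1}=\tfrac{a}{q}+O(a^2/q^2)$, hence $\tfrac{qz_0}{r+1}=a+o(1)$, so the first piece is $z_0^{-(a+1)}e^{a}(1+o(1))$. Multiplying by the prefactor and substituting $z_0=\frac{(r+1)a}{q}v$ collapses the $e^a$'s, the $\bigl((r+1)a/q\bigr)^a$'s and all constants and leaves
\begin{align*}
    \left(\frac{(r+1)a}{eq}\right)^{a}(r!)^{q}\,\frac{B_r(z_0)^{q}}{z_0^{p+1}} ~=~ \frac{q}{(r+1)a}\,v^{-(a+1)}(1+o(1)) ~=~ \frac{q}{(r+1)a}(1+o(1)),
\end{align*}
since $v^{-(a+1)}=\exp\bigl(-(a+1)\log v\bigr)=\exp\bigl(O(a^2/q)+O(a/q)\bigr)=1+o(1)$.

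For the Gaussian factor, note that $\Phi(z)=-p\log z+q\log B_r(z)$ gives $\Phi'(z)=(qQ_r(z)-p)/z$ (using $B_r'/B_r=Q_r/z$), hence $\Phi''(z_0)=qQ_r'(z_0)/z_0$ — equivalently the $2qH_r(z_0)/z_0$ of the introduction, since $Q_r'=2H_r$. As $Q_r$ is analytic at $0$ with $Q_r'(0)=1/(r+1)$ (differentiate \Cref{lem:QTo0}), $Q_r'(z_0)=\tfrac{1}{r+1}(1+o(1))$, so $\Phi''(z_0)=\frac{q}{(r+1)z_0}(1+o(1))$ and, substituting $z_0=\frac{(r+1)a}{q}v$,
\begin{align*}
    \sqrt{\frac{a}{\Phi''(z_0)}} ~=~ \sqrt{\frac{(r+1)a\,z_0}{q}}\,(1+o(1)) ~=~ \frac{(r+1)a}{q}(1+o(1)).
\end{align*}
Multiplying this by the previous display yields $\frac{q}{(r+1)a}\cdot\frac{(r+1)a}{q}\cdot(1+o(1))=1+o(1)$, which is the desired conclusion; every implied $o(1)$ is a function of $a/q$ and $a^2/q$ alone, so the convergence is uniform over the stated range.

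I expect the main obstacle to be the middle step, where $B_r(z_0)^{q}$ is a large power of a quantity tending to $1$: one must carry enough terms both in the expansion of $\log B_r$ near $0$ and in the inversion $z_0=Q_r^{-1}(p/q)$ near $r$ to be certain that the quadratic corrections to the exponent survive multiplication by $q$ only as $o(1)$. This is exactly what the hypothesis $a=o(p^{2/5})$ — equivalently $a^2/q\to0$ — buys; the rest is bookkeeping with the two boundary expansions of $Q_r$ near $r$ (\Cref{lem:QInvLimiting}) and near $0$ (\Cref{lem:QTo0}).
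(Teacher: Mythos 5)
Your argument is correct and follows essentially the same route as the paper: expand $z_0=Q_r^{-1}(r+a/q)$ and $B_r$ near $z=0$ to one order beyond the leading term, observe that the hypothesis $a=o(p^{2/5})$ makes $a^2/q\to 0$ so that the quadratic corrections survive the $q$-th power only as $o(1)$, and multiply the three factors. The one place you genuinely diverge is the Gaussian factor: the paper computes $\Phi''(z_0)$ by expanding $B_r''/B_r$ and $(B_r'/B_r)^2$ separately and checking that the $O(qz)$ terms cancel inside the bracket (a cancellation that is essential, since an uncancelled linear term would contribute a relative error of order $1$), whereas you get the same conclusion structurally from the saddle-point identity $\Phi'(z)=(qQ_r(z)-p)/z$, which gives $\Phi''(z_0)=qQ_r'(z_0)/z_0$ exactly and reduces everything to $Q_r'(0)=1/(r+1)$. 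That identity (equivalently $Q_r'=2H_r$, consistent with the paper's $\Phi''(z_0)=2qH_r(z_0)/z_0$) makes the cancellation automatic and is arguably the cleaner way to handle this term; the rest of your bookkeeping, including the uniformity remark that all implied constants depend only on $r$ through $a/q$ and $a^2/q$, matches the paper's proof.
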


\proof

\bs

\noindent Here are the key calculations. 
\Cref{lem:QInvLimiting} gives a series for $Q_r^{-1}(x)$ as $x\to r$, yielding
$$
    z_0 ~~ = ~~ (r+1) \frac{a}{q} \left(1+O(a/q)\right)
$$
\begin{align}
    (1/z_0)^a ~~ = ~~ \frac{ (q/(r+1))^a }{ a^a } \, \left(1+O(a^2/q)\right)
    \label{eq:reciprocal-z0-pow-a}
\end{align}

$$
    \frac{B_r(z_0)}{(z_0)^r} ~~ = ~~ \frac{1}{r!} \exp\left( \frac{z_0}{r+1} + O(z_0^2) \right)
$$
\begin{align}
    \left(
    \frac{B_r(z_0)}{(z_0)^r} \right)^q
     ~~ = ~~ \left(\frac{1}{r!} \right)^q \, e^a \, \left(1 + O(a^2/q) \right)
    \label{eq:Br-over-z-pow-q} 
\end{align}
\Cref{lem:QTo0} gives a series for $Q_r(z)$ as $z\to 0$, yielding
\begin{align*}
\frac{B_r^{\prime}(z)}{B_r(z)} ~~ = ~~ \frac{1}{z} Q_r(z) ~~ &= ~~
\frac{r}{z} \, \left(1 + z/(r(r+1)) + O(z^2) \right) 
\end{align*}
$$
\left(\frac{B_r^{\prime}(z)}{B_r(z)}\right)^2
~~ = ~~
\frac{r^2}{z^2} \, \left( 1 + \frac{2z}{r(r+1)} + O(z^2) \right)
$$
Similarly,
\begin{align*}
\frac{B_r^{\prime\prime}(z)}{B_r(z)}
~~ &= ~~
\frac{(z^{r-2}/(r-2)! + (z^{r-1}/(r-1)! + \cdots}
     {(z^{r  }/(r  )! + (z^{r+1}/(r+1)! + \cdots} \\
~~ &= ~~
\frac{r(r-1)}{z^2} \, \frac{1+z/(r-1)+O(z^2)}{1+z/(r+1)+O(z^2)} \\
~~ &= ~~
\frac{r(r-1)}{z^2} \, \left(1 + \frac{2z}{r^2-1} + O(z^2) \right)
\end{align*}
Now we plug into
$$
\Phi^{\prime\prime}(z) ~~=~~
\frac{p}{z^2} ~+~ q\frac{B_r^{\prime\prime}(z)}{B_r(z)}
   ~-~ q \left( \frac{ B_r^{\prime}(z)}{B_r(z)}  \right)^2
$$
which will give us on the right
$$
\frac{1}{z^2} \, \left[  p ~+~ qr(r-1) \, \left(1 + 2z/(r^2-1) + O(z^2) \right) 
                       ~-~  qr^2 \, \left(1 + 2z/(r(r+1)) + O(z^2) \right)
                 \right] \, .
$$
The constant term inside the brackets is
$$
p+qr(r-1)-qr^2 ~~=~~ p-qr ~~=~~ a
$$
The coefficient of $z$ inside the brackets is
$$
2qr(r-1)/((r+1)(r-1)) ~-~ 2qr^2/(r(r+1))
~~=~~
2qr \left( 1/(r+1) ~-~ 1/(r+1) \right)
~~ = ~~
0
$$
That gives us
$$
\Phi^{\prime\prime}(z) ~~=~~
\frac{1}{z^2}\left[ a + 0 + O(qz^2) \right] 
 ~~=~~
\frac{1}{z^2}\left[ a + O(a^2/q) \right]
~~=~~ 
\frac{a}{z^2} \, \left(1 + O(a/q) \right).
$$
From this, we conclude
    \begin{align}
    \sqrt{ \frac{a}{\Phi^{\prime\prime}(z_0)} }
    ~~ = ~~
    z_0 \left(1+O(a/q)\right)
    \label{eq:sqrt-a-over-phi-dd}
\end{align}
Starting from a rearrangement of \Cref{eq:def-Hennecart-Fr},
$$
F_r(p,q) ~~=~~
\frac{(rq+a)!}{q! a!} \, \left(\frac{a}{e}\right)^a
                      \, \left(\frac{(B_r(z_0))}{(z_0)^{r}}\right)^q 
                      \, (1/z_0)^a
                      \, (1/z_0)
                      \, \sqrt{\frac{a}{\Phi^{\prime\prime}(z_0)}} \, ,
$$
we substitute from Equations (\ref{eq:reciprocal-z0-pow-a}, \ref{eq:Br-over-z-pow-q}, \ref{eq:sqrt-a-over-phi-dd}) to obtain 
$$
F_r(p,q) ~~=~~
\frac{(rq+a)!}{q! a!} \, \left(\frac{a}{e}\right)^a 
                      \, (1/r!)^q \, e^a                      
                      \, \frac{ (q/(r+1))^a }{ a^a }          
                      \, (1/z_0) 
                      \, z_0                                  
                      \, \left(1+O(a^2/q)\right)  \, .
$$
After simplification, this is found to be in agreement with \Cref{eq:SrBigq}.
\qed

\subsection{Proof of \Cref{thm:mainUniform}}

\label{sec:proof-mainUniform}
\begin{proof}
    We consider the behavior on two complementary domains.

    \emph{High $q$:}
    When $1\leq p-rq \leq p^{1/5}$, we see $q = \Theta(p)$, so $a \coloneqq p - rq = O(q^{1/5}) = o(q^{2/5})$. \Cref{lem:full-convergence-large-q} and \Cref{lem:Rod-approx-eq-Henn} together show
    \begin{align}
        S_r(rq + a, q) \sim F_r(rq + a, q)
    \end{align}
    converges as $q\to \infty$, uniformly for $a = o(q^{2/5})$. Since $p=rq+a=\Theta(q)$, this implies
    \begin{align}
        S_r(p, q) \sim F_r(p, q)
    \end{align}
    converges as $p\to\infty$ uniformly over $q$ satisfying $p - rq \leq p^{1/5}$.

    \emph{Low $q$:}
    When $p^{1/5} \leq p-rq \leq p$, we have $p-rq = \Omega(p^{1/5})$, so \Cref{lem:qz0-to-infty-except-large-q-precise} shows $qz_0 = \Omega(p^{1/5}) \to\infty$.
    By \Cref{lem:CD-unif-convergent} and \Cref{lem:CD-matches-Hennecart}, both the Hennecart (\Cref{eq:hennecart-approx-original}) and CD (\Cref{eq:CD-approx}) approximations are convergent, uniformly in $q$ on this range.

    \emph{Together:} Combine the two regimes to obtain convergence uniform for all $q$ once $p$ is big enough. This finishes
    the proof that \Cref{eq:hennecart-approx-original} is uniformly convergent for all $1 \leq q < p/r$.
\end{proof}

\section{Miscellaneous}
\label{sec:misc}

The first subsection contains a lemma needed to prove
\Cref{lem:CD-unif-convergent}.
The second gives graphs illustrating how the CD and
Hennecart approximations compare.
The last subsection gives conjectures
of two different types.

\subsection{A lemma on the zeroes of $B_r$}

Here we give a lemma essential to proving 
\Cref{lem:CD-unif-convergent}. 
This lemma fills in a detail
essential to the \cite{stirlingConnDobro} proof
of their Theorem 1.1.
To be specific, on page 30, \cite{stirlingConnDobro} writes
``Notice that for any $r \in N$ there exists $\alpha_r>0$ such that \dots{ }\!is a regular function of $z$ in the domains $|z| \leq \alpha_r$ and $R \geq 0$."
 Their proof is done by citing sources, in particular
 \cite{SV1975}, which does (only) part of the job.
 Our lemma below gives a complete proof and is adequate 
 for proving Theorem 1.1 \cite{stirlingConnDobro} 
 and our
 \Cref{lem:CD-unif-convergent}.

\begin{lemma}
    \label{lem:zeroes-of-Br}
    Fix an integer $r \geq 1$. Then there exists $\alpha_r > 0$ such that $B_r(Re^z)$ is nonzero for all $|z| \leq \alpha_r$ and all real $R > 0$.
    That is, there exists sufficiently small $\beta_r > 0$ such that $B_r$ is nonzero in the set
    \begin{align}
        \cC_r = \{ x + y i \mid x > 0, |y| \leq \beta_r x\}.
    \end{align}
\end{lemma}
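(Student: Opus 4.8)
The plan is to establish the equivalent formulation with the cone $\cC_r$: we want a sector $\{x + yi : x > 0, |y| \le \beta_r x\}$ on which $B_r$ has no zero. I would first reduce to a compact/angular statement by scaling. Since $B_r(z) = \sum_{k \ge r} z^k/k! = z^r \widetilde B_r(z)$ with $\widetilde B_r(0) = 1/r! \ne 0$, and since the zeros of an entire function are isolated, there is a radius $\rho_r > 0$ such that $B_r$ (equivalently $\widetilde B_r$) is nonzero on the punctured disk $0 < |z| \le \rho_r$. So zeros of $B_r$ near the positive real axis, if any, must have modulus bounded below. The substance of the lemma is therefore to show that zeros of $B_r$ stay away from the positive real axis \emph{at every scale}, which is why the $B_r(Re^z)$ formulation is the natural one: it asks for a uniform (in $R$) angular gap.

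The key idea I would use is a direct estimate on $\Re(e^{-i\phi}\,e^z)$ versus the polynomial tail, splitting into a bounded region and a large region. Write $z = x + iy$. For the large region, say $x \ge M_r$ for a constant $M_r$ to be chosen: here $|e^z| = e^x$ dominates the polynomial part $\sum_{k=0}^{r-1} z^k/k!$ as soon as $e^x > \sum_{k=0}^{r-1}|z|^k/k!$, and one checks that for $|y| \le x$ (say) and $x$ large this holds with room to spare, forcing $B_r(z) \ne 0$; in fact $|B_r(z)| \ge e^x/2$ there. For the bounded region $0 < x \le M_r$ with $|y| \le \beta_r x \le \beta_r M_r$: the set of points of $B_r$ with small imaginary part and bounded real part is a compact subset of the closed right half plane near the real axis, and on the positive real ray $B_r$ is strictly positive (every term $x^k/k!$, $k \ge r$, is positive); so by continuity and compactness there is an angular neighborhood of the ray $\{x > 0\}$ intersected with $\{0 < x \le M_r\}$ on which $B_r$ stays nonzero — this is where choosing $\beta_r$ small enough does the work. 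Patching the two regions and also handling the sliver $0 < x \le \rho_r$ via the punctured-disk fact above gives the cone $\cC_r$. Finally, the two formulations are equivalent: $z \mapsto Re^z$ with $|z| \le \alpha_r$ sweeps out exactly a truncated cone about the positive real axis (of half-angle $\approx \alpha_r$) as $R$ ranges over $(0,\infty)$, so nonvanishing of $B_r$ on $\cC_{r}$ with $\beta_r = \tan\alpha_r$ is the same statement.

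I expect the main obstacle to be making the bounded-region argument genuinely uniform rather than merely pointwise: one must produce a single $\beta_r$ that works simultaneously for all $0 < x \le M_r$, including as $x \to 0^+$ where the cone pinches to a point. The clean way around this is the scaling/homogeneity observation: near the origin $B_r(z) = (z^r/r!)(1 + O(z))$, so on any small enough disk $|z| \le \rho_r$ the factor $1 + O(z)$ has positive real part and $z^r$ is nonzero for $z \ne 0$, giving nonvanishing on the whole punctured disk with no angular restriction at all; then the truly compact set to analyze is the annular region $\rho_r \le x \le M_r$, $|y| \le \beta_r x$, which does not touch the origin, and there the compactness-plus-positivity argument is unproblematic. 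A secondary technical point is pinning down $M_r$ in the large-$x$ estimate; this is a routine comparison of $e^x$ against a degree-$(r-1)$ polynomial in $|z|$ and I would not belabor it.
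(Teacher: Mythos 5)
Your proposal is correct, but it handles the bounded region by a genuinely different (and more self-contained) route than the paper. Both proofs treat large $x$ identically: for $x\ge$ some $k_r$ (your $M_r$) the triangle inequality gives $\abs{B_r(x+yi)}\ge \abs{e^{x+yi}}-\sum_{k=0}^{r-1}\abs{x+yi}^k/k!\ge c\,e^x>0$ once $\beta_r$ is small enough that $\abs{x+yi}$ is comparable to $x$. The divergence is in the region $0<x\le k_r$: the paper chooses $\beta_r$ so that the truncated cone sits inside the parabolic region $y^2<4(x+1)$ and then invokes Corollary 4.2 of \cite{SV1975}, which asserts $B_r$ has no zeros there except at the origin; you instead avoid any external citation by (i) factoring $B_r(z)=z^r\bigl(1/r!+O(z)\bigr)$ to get a zero-free punctured disk $0<\abs{z}\le\rho_r$ with no angular restriction at all, and (ii) using positivity of $B_r$ on the positive real axis together with continuity and compactness on the remaining truncated cone $\{\rho_r'\le x\le M_r,\ \abs{y}\le\beta_r x\}$ (with $\rho_r'$ a small positive constant so the set avoids the origin, which is exactly the pinching issue you flag and resolve). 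Your version is more elementary and removes the dependence on \cite{SV1975} entirely, which is arguably in the spirit of the paper's stated goal of filling the gap left by that citation; the trade-off is that the compactness step is non-constructive, whereas the paper's choice of $\beta_r$ (via $\abs{1+\beta_r i}^{r-1}\le 2$ and $(\beta_r k_r)^2<4(k_r+1)$) is essentially explicit. The translation between the sector $\cC_r$ and the $B_r(Re^z)$ formulation is handled equivalently in both.
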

\begin{proof}
    Fix $r$. Since $e^x - B_r(x) = \sum_{k=0}^{r-1} x^k/k!$ is a polynomial in $x$, we have
    \begin{align*}
        3|e^x - B_r(x)| \leq |e^x|
    \end{align*}
    for sufficiently large real $x$ (say, for all $x \geq k_r$ for a constant $k_r>0$).
    Fix $\beta_r>0$ sufficiently small such that $|1+\beta_r i|^{r-1} \leq 2$ and $(\beta_r k_r)^2 < 4(k_r + 1)$.
    Then $y^2 < 4(x+1)$ for all $x+yi \in \cC_r$ satisfying $x \leq k_r$.

    By Corollary 4.2 of \cite{SV1975}, $B_r(z)$ does not have any zeroes at $z = x + yi$ if $y^2 < 4(x+1)$, except at $z=0$. Hence $B_r(z)$ does not have any zeroes in 
    $$\cC_r \cap \{x + yi \mid x \leq k_r\}.$$ This finishes the case of $x \leq k_r$, so we now turn to $x \geq k_r$.

    \bs 

    For all $x+yi \in \cC_r$, we have $|x+yi| \leq |1 + \beta_r i| x$. Hence
    \begin{align*}
        |e^{x+yi} - B_r(x+yi)| &= \sum_{k=0}^{r-1} (x+yi)^k/k!
        \\ &\leq |1 + \beta_r i|^{r-1} \sum_{k=0}^{r-1} x^k/k!
        \\ &= |1 + \beta_r i|^{r-1} (e^x - B_r(x)).
    \end{align*}
    Thus for all $x+yi \in \cC_r \cap \{x + y i \mid x \geq k_r\}$, by choice of $\beta_r$ and $k_r$,
    \begin{align*}
        \abs{e^{x+yi} - B_r(x+yi)} &\leq 2(e^x - B_r(x))
        \\ &\leq \frac{2}{3} \abs{e^x} = \frac{2}{3} \abs{e^{x+yi}}.
    \end{align*}
    Hence in this domain,
    \begin{align*}
        \abs{B_r(x+yi)} &\geq \abs{e^{x+yi}} - \abs{e^{x+yi} - B_r(x+yi)}
        \\ &\geq \frac{1}{3} \abs{e^{x+yi}} > 0.
    \end{align*}
    Thus $B_r(x+yi)$ does not have any zeroes in $\cC_r \cap \{x + yi \mid x \geq k_r\}$.

    So far we have shown $B_r(x+yi)\neq 0$ for all $x+yi\in \cC_r$. With careful choice of $\alpha_r>0$ (specifically, defining $\alpha_r$ implicitly by $\beta_r = \max_{|z| \leq \alpha_r} \Im(e^z)/\Re(e^z)$), we get
    
    \begin{align*}
        \cC_r = \{ Re^z \mid R > 0, |z| \leq \alpha_r \},
    \end{align*}
    completing the proof.
\end{proof}

\subsection{Comparing CD and Hennecart approximations by plots}

\label{sec:comparing-CD-Hennecart}

Hennecart's formula is uniformly convergent over all $1 \leq q < p/r$, while CD's formula can only be uniformly convergent under the assumption $p-rq \to \infty$. This can be visualized by plots. We plot the relative error with the exact form of $S_r(p,q)$ for $r=2$, computed by the formula provided by Alekseyev in https://oeis.org/A008299:
\begin{align}
    \label{eq:alekseyev-stirling}
    S_2(p,q) = \sum_{i=0}^{q} (-1)^i \binom{p}{i} \curlyStirling{p-i}{q-i},
\end{align}
where $\curlyStirling{p}{q} = S_1(p,q)$ are the standard (1-associated) Stirling numbers of the second kind.

\begin{figure}[H]
    \centering
    \includegraphics[width=0.7\textwidth]{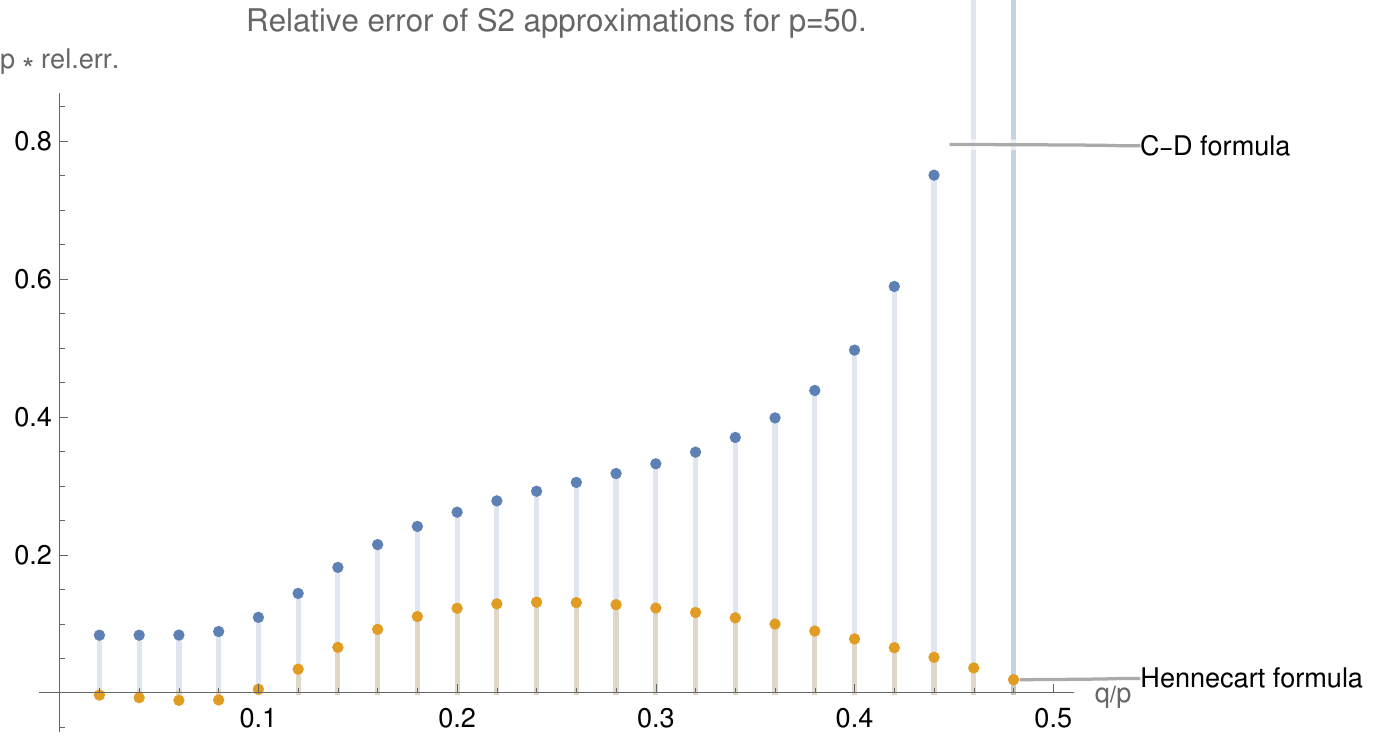}
    {}\\
    \includegraphics[width=0.7\textwidth]{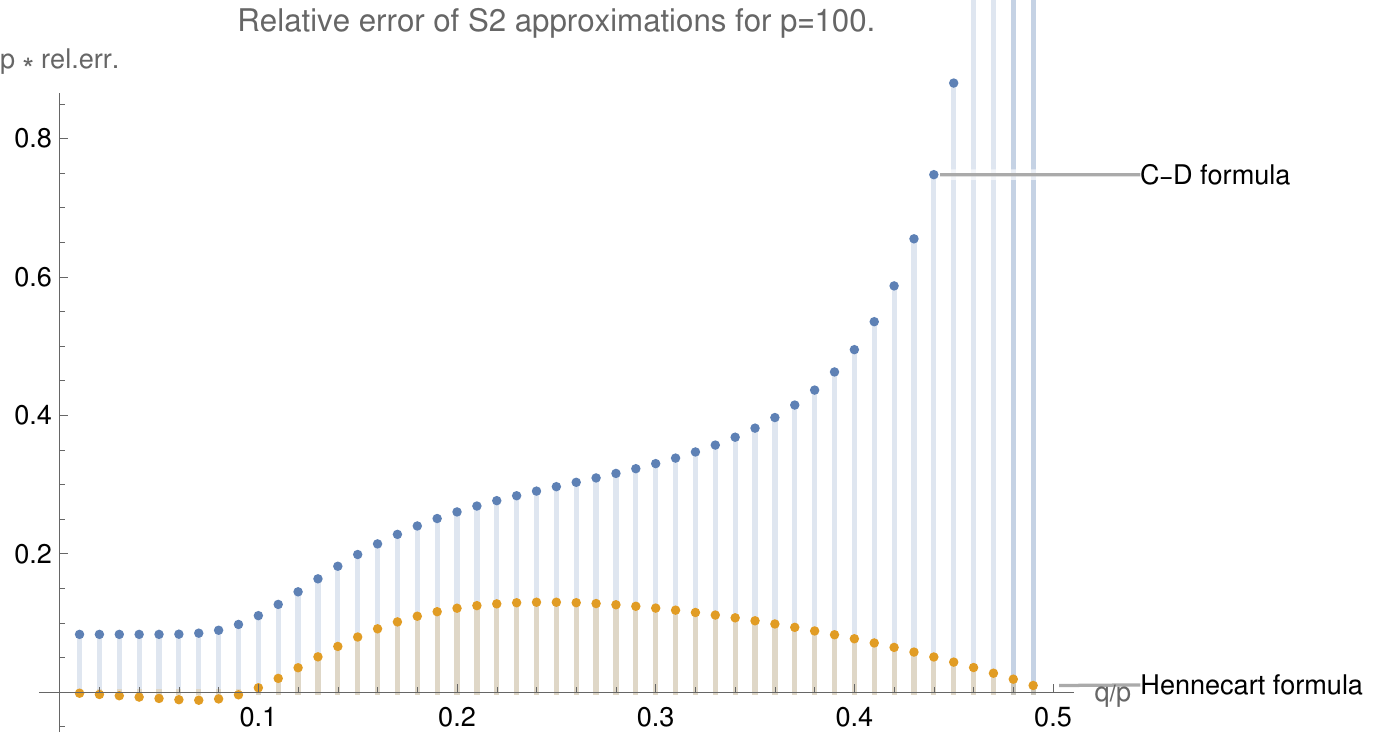}
    \caption{For $r=2$, plots of the relative error given by $\frac{\operatorname{approx}(p,q)}{S_2(p,q)} - 1$, comparing the approximation given by Hennecart (\Cref{eq:hennecart-approx-original}) and by CD (\Cref{eq:CD-approx}). The formulas are the same, except for applying the Stirling approximation for $(p-rq)!$, whose relative error is approximately given by $0.083/(p-rq)$.}
    \label{fig:CD-vs-Hennecart-relative-error}
\end{figure}

\subsection{Conjectures}

\subsubsection{A key series has positive coefficients}

We describe a conjecture which, if true, simplifies 
the proof of the asymptotic expansions given in
\cite{stirlingConnDobro}. In particular, it may simplify the proof of $|J| \to 0$.

\begin{conjecture}
    For each integer $r\geq1$, 
    the power series (about 0) of
        $$ e^{-x/(r+1)} B_r(x) $$
    has all non-negative coefficients.
\end{conjecture}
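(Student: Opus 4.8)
The plan is to exploit the closure properties of the class of entire functions whose Taylor coefficients are all non-negative, combined with a factorization of $B_r$ that isolates the obstruction. Observe first that $B_r(x) = \sum_{k \geq r} x^k/k! = x^r \sum_{j \geq 0} x^j/(j+r)!$, so $B_r(x)/x^r$ has positive coefficients, and it suffices to show $e^{-x/(r+1)} B_r(x)/x^r$ has non-negative coefficients. Equivalently, writing $g_r(x) = \sum_{j \geq 0} x^j/(j+r)!$, the claim is that $e^{-x/(r+1)} g_r(x)$ has non-negative Taylor coefficients. The natural first step is to get a handle on $g_r$: it satisfies a simple identity, since $x^r g_r(x) = B_r(x) = e^x - \sum_{k=0}^{r-1} x^k/k!$, and $g_r' = g_{r-1} - g_r$ type relations follow from differentiating. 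More useful is the integral representation $g_r(x) = \frac{1}{(r-1)!}\int_0^1 (1-t)^{r-1} e^{xt}\,dt$, which one verifies by expanding $e^{xt}$ and integrating term by term.

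From that integral representation, $e^{-x/(r+1)} g_r(x) = \frac{1}{(r-1)!}\int_0^1 (1-t)^{r-1} e^{x(t - 1/(r+1))}\,dt$. The coefficient of $x^a$ in this is $\frac{1}{(r-1)!\,a!}\int_0^1 (1-t)^{r-1} (t - \tfrac{1}{r+1})^a\,dt$, so the conjecture reduces to the purely real-analytic inequality
\begin{align*}
    \int_0^1 (1-t)^{r-1} \left(t - \tfrac{1}{r+1}\right)^a\,dt \;\geq\; 0 \qquad \text{for all integers } a \geq 0,\ r \geq 1.
\end{align*}
The choice $1/(r+1)$ is exactly the value that makes this work: it is the mean of $t$ under the probability density proportional to $(1-t)^{r-1}$ on $[0,1]$ — indeed $\int_0^1 t(1-t)^{r-1}dt \big/ \int_0^1 (1-t)^{r-1}dt = \frac{1/(r(r+1))}{1/r} = \frac{1}{r+1}$. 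So the statement is that the $a$-th central moment of a Beta$(2,r)$-type distribution is non-negative. For even $a$ this is automatic. For odd $a$ the sign is a genuine skewness question, and that is where the real work lies.

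For odd $a$, the plan is to substitute $u = t - \tfrac{1}{r+1}$ and split the integral at $u=0$, or better, to compare the mass of $(t-\tfrac1{r+1})^a$ on the two sides of the mean. One clean route: show that the density $w(t) = (1-t)^{r-1}$ on $[0, \tfrac{r}{r+1}]$ (rescaled so the mean is $\tfrac1{r+1}$ after reflection) is such that reflecting the left tail about the mean lands below the right tail — i.e. $w(\tfrac1{r+1} - s) \leq w(\tfrac1{r+1} + s)$ for $0 \leq s \leq \tfrac1{r+1}$, which holds since $w$ is decreasing and $\tfrac1{r+1}+s$ is the larger argument. Then pair up $s$ with $-s$ in the integral $\int (t-\tfrac1{r+1})^a w(t)\,dt$: on the overlap region $|s| \leq \tfrac1{r+1}$ the contributions $s^a(w(\tfrac1{r+1}+s) - w(\tfrac1{r+1}-s)) \geq 0$ for odd $a$ (both factors have the same sign as $s$... wait, $s^a$ has the sign of $s$ and $w(\tfrac1{r+1}+s)-w(\tfrac1{r+1}-s) \leq 0$ for $s>0$), so actually this pairing gives a \emph{negative} contribution on the overlap, which is then outweighed by the leftover positive region $t \in (\tfrac{2}{r+1}, 1]$ where $(t-\tfrac1{r+1})^a > 0$. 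Making this outweighing quantitative — bounding the deficit from the overlap against the surplus from the tail — is the main obstacle, and I expect it requires either an explicit evaluation of the incomplete Beta integral or a clever integration-by-parts identity relating the moment for exponent $a$ to that for $a-1$ or $a+1$, $r+1$. An induction on $a$ (or on $r$) driven by such a recurrence is the most promising finish; absent that, one falls back on the elementary but fiddly estimate just sketched.
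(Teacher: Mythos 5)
First, note that the paper offers no proof of this statement: it is stated as an open conjecture, so there is no ``paper's proof'' to compare against. Your proposal does not close it either, and you say as much in your final sentence, so what you have is a (correct and genuinely useful) reduction with the essential difficulty still open.

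The reduction itself checks out. Writing $B_r(x)/x^r = g_r(x) = \sum_{j\ge 0} x^j/(j+r)!$ and verifying the representation $g_r(x) = \frac{1}{(r-1)!}\int_0^1 (1-t)^{r-1}e^{xt}\,dt$ (via $\int_0^1 t^j(1-t)^{r-1}dt = j!(r-1)!/(j+r)!$) is right, and since multiplying by $x^r$ only shifts coefficients, the conjecture is equivalent to
\begin{align*}
\int_0^1 (1-t)^{r-1}\Bigl(t-\tfrac{1}{r+1}\Bigr)^a\,dt \;\ge\; 0 \qquad (a\ge 0),
\end{align*}
i.e.\ to the non-negativity of all central moments of the $\mathrm{Beta}(1,r)$ distribution, whose mean is exactly $1/(r+1)$. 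This is real progress: it explains why $1/(r+1)$ is the distinguished constant, it disposes of all even-index coefficients at once, and it is consistent with the boundary case $r=1$ (uniform distribution, all odd central moments equal to zero, matching $e^{-x/2}(e^x-1)=2\sinh(x/2)$).

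The gap is the odd case, and it is not a routine one. Your reflection argument, as you yourself observe mid-paragraph, produces a \emph{negative} contribution on the overlap region $|t-\tfrac{1}{r+1}|\le \tfrac{1}{r+1}$ (because $w(t)=(1-t)^{r-1}$ is decreasing, the left side of the mean carries more mass than the right), which must be beaten by the tail $t\in(\tfrac{2}{r+1},1]$. Nothing in the proposal controls that competition; positive skewness of $\mathrm{Beta}(1,r)$ gives only the third central moment, and positivity of the third central moment does not in general propagate to all higher odd central moments. Equivalently, the claim is that the explicit alternating sum $\sum_{k=0}^{a}\binom{a}{k}\frac{k!\,r!}{(k+r)!}\bigl(\tfrac{-1}{r+1}\bigr)^{a-k}$ is non-negative for odd $a$, and no mechanism (recurrence in $a$ or $r$, integration by parts, log-concavity/total-positivity input) is supplied to establish it. Until that step is carried out, the conjecture remains open; but the moment reformulation is worth keeping, as it converts the problem into a clean, self-contained question about Beta distributions.
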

\bs 

\subsubsection{A stronger asymptotic error bound}

\begin{conjecture}
    Let $r$ be a fixed positive integer. Then as $p\to\infty$,
    \begin{align}
        S_r(p,q) = F_r(p,q)(1 + O(p^{-1}))
    \end{align}
\end{conjecture}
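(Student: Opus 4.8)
The plan is to prove the stronger error bound $S_r(p,q) = F_r(p,q)(1 + O(p^{-1}))$ by refining the two-region argument used for \Cref{thm:mainUniform}, pushing each piece to obtain a $p^{-1}$ relative error rather than the weaker $p^{-\delta}$ that sufficed before. Recall that the region $p - rq = \Omega(p^{1/5})$ was handled via \Cref{lem:CD-unif-convergent}, which already gives relative error $O((qz_0)^{-1})$, and \Cref{lem:qz0-to-infty-except-large-q-precise} shows $qz_0 = \Omega(\min\{p^{1-\epsilon}, p-rq\})$. So in the ``low $q$'' regime the error is genuinely controlled by $p - rq$, and one already has $S_r = F_r(1 + O((p-rq)^{-1}))$ there; combined with the Stirling-approximation correction in \Cref{lem:CD-matches-Hennecart}, which is also $O((p-rq)^{-1})$, this matches the conjectured rate whenever $p - rq = \Omega(p)$, i.e.\ when $q$ stays a fixed fraction below $p/r$. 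The first task is thus to verify that the CD proof, when all $O(p^{-1})$'s are tracked as $O((qz_0)^{-1})$'s, really does yield relative error $O((qz_0)^{-1})$ with no hidden loss; the $s = 2$ truncation in CD Lemma 2.1 gives $O(\zeta^2) = O((qz_0)^{-1})$, which is exactly what is needed, so this region should go through.

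The real work is the ``high $q$'' regime, $p - rq = a$ small, where the CD bound $O((qz_0)^{-1}) = O(a^{-1})$ degrades and is useless when $a$ is bounded. Here one must upgrade the combinatorial argument of \Cref{lem:full-convergence-large-q,lem:Rod-approx-eq-Henn}. The plan is to re-examine the exact identity
$$
S_r(rq+a,q) = \sum_{\substack{\lambda \vdash a \\ \lambda = 1^{\mu_1}2^{\mu_2}\cdots \\ k \le q}} \frac{(rq+a)!}{(r!)^{q-k}(q-k)!\prod_{i\ge1}((i+r)!)^{\mu_i}\mu_i!}
$$
and carry the expansions $(q-k)! = q^{-k}q!\,(1 + O(k^2/q))$, and the expansion of $\exp(r!qx^{-r}B_{r+1}(x))$ around $\exp(qx/(r+1))$, to one more order. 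The dominant term is $\tfrac{(rq+a)!}{a!q!(r!)^q}(q/(r+1))^a$; the next correction should be of relative size $O(a^2/q)$, and since $q = \Theta(p)$ in this regime and $a = O(p^{1/5})$, that is $O(p^{2/5}/p) = O(p^{-3/5})$ — not yet $O(p^{-1})$. To reach $O(p^{-1})$ one needs $a = O(1)$, or else a genuinely sharper saddle-point analysis of the coefficient extraction $[x^a]\exp(r!qx^{-r}B_{r+1}(x))$ rather than the crude bound $[x^a]F(x) \le F(R)/R^a$; one would instead use a circle $|x| = R$ of radius near the saddle and estimate the integral $\tfrac{1}{2\pi i}\oint F(x)x^{-a-1}\,dx$ to second order, which should recover a full asymptotic series in $1/q$ and hence relative error $O(1/q) = O(1/p)$. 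Similarly \Cref{lem:Rod-approx-eq-Henn} must be redone keeping the $O(a/q)$ and $O(a^2/q)$ terms and checking they combine to $O(1/q)$; the cancellation of the $z^1$-coefficient in $\Phi''$ that already appears there is encouraging, as it suggests the expansion is ``even'' enough that the first genuine correction is $O(z^2) = O(a^2/q^2) = O(p^{-2})$ in the relevant places, with the $O(1/p)$ coming only from the Stirling tail of $a!$.

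The main obstacle is the middle ground: when $a = p - rq$ grows like a small power of $p$, say $p^{1/5}$, neither the CD bound $O(a^{-1}) = O(p^{-1/5})$ nor the naive combinatorial bound $O(a^2/q) = O(p^{-3/5})$ reaches $O(p^{-1})$, and the overlap argument that worked for mere convergence does not automatically transfer to the quantitative rate. Resolving this forces a uniform saddle-point estimate of the contour integral \Cref{eq:Sr-exact-contour-integral} valid across the whole range $1 \le q < p/r$ with an explicit $O(p^{-1})$ second-order term — essentially proving that the asymptotic expansion of \cite{Hennecart,moserWyman} has its first correction term of size $O(p^{-1})$ uniformly. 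I expect this to require the full strength of \Cref{lem:zeroes-of-Br} (to control the tail integral $|J|$ at the sharper rate) together with a careful analysis of $H_r(z_0)$ and the higher derivatives $\Phi'''(z_0)$, $\Phi^{(4)}(z_0)$ uniformly in the degenerating parameter $z_0 \to 0$; bounding these uniformly as $z_0 \to 0$, using the series of \Cref{lem:QTo0,lem:QInvLimiting}, is where I anticipate the technical heart of the argument to lie.
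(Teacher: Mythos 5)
This statement is a \emph{conjecture} in the paper: the authors give no proof of it, only numerical evidence (the plots in \Cref{fig:Henn-relative-error} suggesting $p\cdot|F_r/S_r-1|<0.16$ for $r=2$). So there is no ``paper proof'' to compare against, and the relevant question is whether your proposal actually closes the gap the authors left open. It does not. Your own text concedes the decisive point: in the regime $1\ll a=p-rq\ll p$, the CD-based bound of \Cref{lem:CD-unif-convergent} gives relative error $O((qz_0)^{-1})=O(a^{-1})$ (by \Cref{lem:qz0-to-infty-except-large-q-precise}, since $qz_0=\Theta(p-rq)$ there), the Stirling correction in \Cref{lem:CD-matches-Hennecart} likewise contributes $O((p-rq)^{-1})$, and the combinatorial expansion behind \Cref{lem:full-convergence-large-q,lem:Rod-approx-eq-Henn} gives relative error on the order of $a^{5/2}/q$. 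The first is $O(p^{-1})$ only when $a=\Omega(p)$ and the second only when $a=O(q^{2/5}p^{-2/5})=O(1)$ (in that regime $q=\Theta(p)$). For any $a$ growing with $p$ but sublinearly --- e.g.\ $a\sim p^{1/2}$ --- neither route comes close to $O(p^{-1})$, and no overlap argument can rescue this. What you offer in place of a proof is the \emph{hope} that a second-order saddle-point analysis of \Cref{eq:Sr-exact-contour-integral}, uniform as $z_0\to0$, would produce a first correction term of size $O(p^{-1})$; that is precisely the hard, unperformed computation, and it is where any actual proof would have to live. Note also that your heuristic that the first genuine correction is $O(a^2/q^2)$ ``with the $O(1/p)$ coming only from the Stirling tail of $a!$'' is not consistent with the $O(a^2/q)$ error terms already visible in \Cref{lem:Rod-approx-eq-Henn}; those would have to be shown to cancel against corresponding terms in a refined version of \Cref{lem:full-convergence-large-q}, and you have not exhibited that cancellation.

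To your credit, the diagnosis is accurate and essentially explains why the authors state this as a conjecture rather than a theorem: the two-regime decomposition that suffices for uniform convergence (\Cref{thm:mainUniform}) degrades badly when one asks for a quantitative $O(p^{-1})$ rate, because each regime's error is controlled by a different parameter ($p-rq$ versus $a^{5/2}/q$) and neither is uniformly $O(p^{-1})$. But a correct identification of the obstacle, together with a sketch of what a resolution might require, is not a proof; as it stands the proposal establishes nothing beyond what \Cref{thm:mainUniform} already gives.
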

The conjecture is suggested by numerical plots of the relative error in \Cref{fig:Henn-relative-error}. The scaled relative error appears bounded with $p \cdot\abs{F_r / S_r - 1} < 0.16$, which suggests
    $$\abs{\frac{F_r(p,q)}{S_r(p,q)} - 1} < \frac{0.16}{p} = O(p^{-1}) .$$

    \bs 

\begin{figure}[H]
    \centering
    \includegraphics[width=0.8\linewidth]{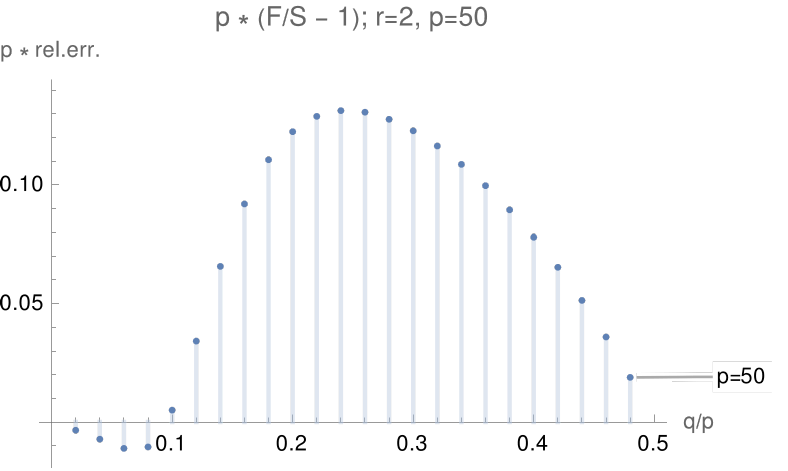}
    \includegraphics[width=0.8\linewidth]{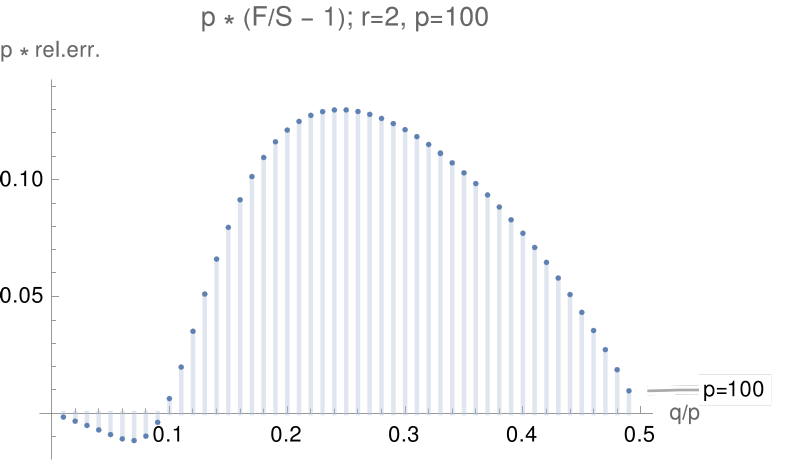}
    \caption{Scaled relative error $p \cdot (F_r / S_r - 1)$ for $r=2$. One plot shows $p=50$ and one plot shows $p=100$, but they appear to follow the same curve when plotted with respect to $q/p$.}
    \label{fig:Henn-relative-error}
\end{figure}

\clearpage
\printbibliography
\bs 

\newpage 
\begin{center}
    {\large NOT FOR PUBLICATION}
\end{center}
\thispagestyle{empty} \addtocounter{page}{-1}
\label{table-of-contents}
\tableofcontents
\clearpage

\end{document}